\newtheorem{theorem}{Theorem}
\newtheorem{proposition}{Proposition}[section]
\newtheorem{lemma}[proposition]{Lemma}
\newtheorem{corollary}[proposition]{Corollary}
\theoremstyle{definition}
\newtheorem{definition}[proposition]{Definition}
\newtheorem{remark}[proposition]{Remark}
\newtheorem{example}[proposition]{Example}
\newcommand{\E}{\mathbbm{E}} \renewcommand{\P}{\mathbbm{P}}
\newcommand{\N}{\mathbbm{N}} 
\newcommand{\R}{\mathbbm{R}}
\newcommand{\eps}{\varepsilon} 
\newcommand{\abs}[1]{\lvert#1\rvert} 
\newcommand{\Abs}[1]{\left\lvert#1\right\rvert}
\newcommand{\norm}[1]{\lVert#1\rVert} 
\newcommand{\indset}[1]{\mathbbm{1}_{#1}} 
\DeclareMathAlphabet{\mathpzc}{OT1}{pzc}{m}{it}
\newcommand{\smallx}{\mathpzc{x}}
\newcommand\unnumberedfootnote[1]{ %
        \let\temp=\thefootnote %
        \renewcommand{\thefootnote}{}%
        \footnote{#1}%
        \let\thefootnote=\temp%
        \addtocounter{footnote}{-1}}
\numberwithin{equation}{section}
\title{A mixing tree-valued process arising under neutral evolution
  with recombination} \author{Andrej Depperschmidt, Etienne
  Pardoux, Peter Pfaffelhuber} \date{\today}
\begin{document}
\maketitle
\unnumberedfootnote{\emph{AMS 2010 subject classification.} {\tt
    92D15} (Primary) {\tt 60G10, 60K35} (Secondary).}

\unnumberedfootnote{\emph{Keywords and phrases.} Ancestral
  recombination graph, Kingman coalescent, tree-valued process,
  Gromov-Hausdorff metric}

\begin{abstract}
  The genealogy at a single locus of a constant size $N$ population in
  equilibrium is given by the well-known Kingman's coalescent. When
  considering multiple loci under recombination, the ancestral
  recombination graph encodes the genealogies at all loci in one graph.
  For a continuous genome $\mathbbm G$, we study the tree-valued
  process $(\mathscr T^N_u)_{u\in\mathbbm G}$ of genealogies along the
  genome in the limit $N\to\infty$. Encoding trees as metric measure
  spaces, we show convergence to a tree-valued process with c\`adl\`ag
  paths. In addition, we study mixing properties of the resulting
  process for loci which are far apart.
\end{abstract}

\section{Introduction}
\label{sec:introduction}
A large body of literature within the area of mathematical population
genetics is dealing with models for population of constant size. While
finite models such as the Wright-Fisher or the Moran model all have
their specificities, the limit of large populations -- given some
moments are bounded -- leads to a unified framework with diffusions
and genealogical trees as their main tools; see e.g.\
\cite{Ewens2004}. In finite population models of size $N$ --
frequently denoted Cannings models \citep{Cannings1974} --- the
offspring distribution of all individuals in each generation is
exchangeable and subject to the constraint of a constant population
size.

Neutral evolution accounts for the fact that all individuals have the
same chance to produce offspring in next generations. Recombination is
the evolutionary force by which genetic material from more than one
(i.e.\ two in all biologically relevant cases) parents is mixed in
their offspring. Genealogies under neutral evolution without
recombination are given through the famous Kingman coalescent
\citep{Kingman1982a}, a random binary tree where pairs of lines merge
exchangeably in a Markovian fashion. Genealogies under recombination
must deal with the fact that recombination events mix up genetical
material from the parents. As a consequence, lines not only merge due
to joint ancestry, but also split due to different ancestors for the
genetic material along the genome. The resulting genealogy is encoded
in the \emph{Ancestral Recombination Graph} (ARG), which appeared
already in \cite{Hudson1983}, but entered the mathematical literature
only in \cite{Griffiths1991, GriffithsMarjoram1997}. This graph gives
the genealogies of all genetic loci under stationarity at once; see
also Figure~\ref{fig:arg}.

The sequence of genealogies along the chromosome is most important for
biological applications, and fast simulation and inference of such
genealogies is a major research topic today
\citep{Rasmussen2014}. While the ARG gives the sequence of genealogies
from the present to the past, a construction of genealogies along the
chromosome is possible as well \citep{WiufHein1999,
   LeocardPardoux2010}. The advantage of the latter approach is that it
allows to approximate the full sequence by ignoring long-range
dependencies, a fruitful research topic started by \cite{McVean2005}.

The goal of the present paper is to study the sequence of genealogies
along the genome, denoted $\mathbbm G$, in the limit
$N\to\infty$. Precisely, we will use the notion of (ultra-) metric
measure spaces, introduced in the probabilistic community by
\cite{GrevenPfaffelhuberWinter2009}, in order to formalize
genealogical trees, read off the sequence $(\mathscr
T_u^N)_{u\in\mathbbm G}$ from the ARG and let $N\to\infty$. As main
results, we obtain convergence (Theorem~\ref{T1}) to an ergodic
tree-valued process which has c\`adl\`ag paths and study its mixing
properties (Theorem~\ref{T2}). We start by introducing our notation.

\begin{remark}[Notation]
  Let $(E,r)$ be a metric space. We denote by $\mathcal M_1(E)$ the
  space of all probability measures on $E$ equipped with the
  Borel-$\sigma$-algebra $\mathcal B(E)$. The space $\mathcal C_b(E)$
  consists of all continuous, bounded, real-valued functions defined
  on $E$. For a second metric space $(F,r_F)$ and
  $\mu \in \mathcal M_1(E)$ and a measurable map $\varphi: E\to F$,
  the measure $\varphi_\ast \mu$ is the push-forward of $\mu$ under
  $\varphi$. We denote vectors $(x_1, x_2,\dots) \in E^{\mathbbm N}$
  by $\underline x$ and integrals will be frequently denoted by
  $\langle \nu, f\rangle \coloneqq \int f d\nu$. Weak convergence of
  probability measures will be denoted by $\Rightarrow$. For
  $I\subseteq \R$, the space $\mathcal D_E(I)$ is the set of c\`adl\`ag
  functions $f:I\to E$.
\end{remark}

\section{Ancestry under recombination}
For a set of loci $\mathbbm G$, also called \emph{genome} in the
sequel, we aim to study the ancestry of individuals from a large
population. The joint genealogy for all loci is given by the ancestral
recombination graph (Section~\ref{S:21}), from which we can read off
genealogical trees at all loci $u\in\mathbbm G$
(Section~\ref{S:22}). In Section~\ref{S:23}, we formalize random
genealogies as metric measure spaces.

\subsection{The ancestral recombination graph}
\label{S:21}

In this section we give a formal definition of the \emph{ancestral
  recombination graph (ARG)} which is a (slight) generalization of the
definition from \cite{GriffithsMarjoram1997}; see also
the leftmost branching and coalescing graph in Figure~\ref{fig:arg}.

\begin{definition}[$N$-ancestral recombination graph\label{def:ARG}]
  \leavevmode
  \begin{enumerate}
  \item For $a<b$, $\mathbbm G \coloneqq [a,b]$, $\rho>0$ and a finite
    set $[N] \coloneqq \{1,\dots,N\}$, the $N$-\emph{ancestral
      recombination graph (ARG)}, denoted by
    $\mathcal A \coloneqq \mathcal A^{N} \coloneqq \mathcal
    A^{N}(\mathbbm G)$,
    starting with particles in the set $[N]$ is defined by the
    following Markovian dynamics:
    \begin{enumerate}[(i)]
    \item When there are $k \ge 2$ particles, two randomly chosen
      particles \emph{coalesce} (merge) at rate $\binom k 2$ and give
      rise to a single new particle.
    \item Each particle \emph{splits} in two at rate $\rho(b-a)$,
      resulting in a new \emph{left} and a new \emph{right} particle.
      Such a splitting event is marked by an independent, uniformly
      distributed random variable $U\in \mathbbm G$.
    \end{enumerate}
    Denote by $\mathcal A_t$ the set of particles at time $t\geq 0$
    and stop when there is only one particle left.
  \item The particle-counting process $\mathcal N = (N_t)_{t\geq 0}$
    with $N_t=\# \mathcal A_t$ for $\mathcal A$ is a birth-and death
    chain. Precisely, $\mathcal N$ has birth rate $b_k = \rho(b-a)k$
    and death rate $d_k = \binom k 2$, $k=1,2,\dots $ and is stopped at
    $T=\inf\{t: N_t=1\}$.
  \end{enumerate}
\end{definition}
\noindent
Since the birth rates are linear and the death rates are quadratic,
the expectation of the stopping time $T$ is finite; see Theorem~2.1
in \cite{PardouxSalamat2009} for an explicit expression.

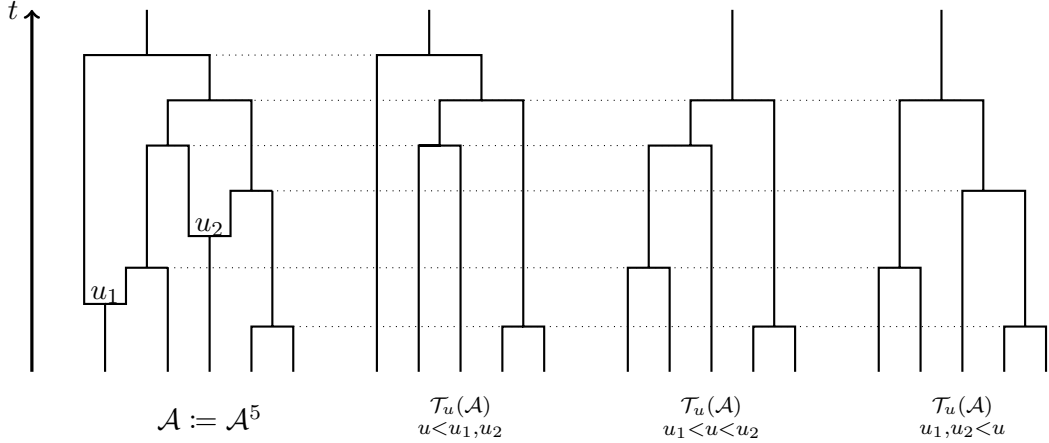
\begin{figure}
        \begin{tikzpicture}[xscale=.55,yscale=0.6]
          \draw[->,very thick] (-2.25,0) --(-2.25,8);
          \draw (-2.7,8) node {$t$};

          \draw[thick] (-0.5,0)--(-0.5,1.5)--(-1,1.5)--(-1,7)--(2,7) -- (2,6)
          (-0.5,1.5) -- (0,1.5) -- (0,2.3) -- (1,2.3)
          (1,0) -- (1,2.3)
          (2,0) -- (2,3) -- (1.5,3) -- (1.5,5) -- (0.5,5) -- (0.5,2.3)
          (1,5) -- (1,6) -- (3,6) -- (3,4)
          (2,3) -- (2.5,3) -- (2.5,4) -- (3.5,4)
          (3,0) -- (3,1) -- (4,1) -- (4,0)
          (3.5,1) -- (3.5,4) (0.5,7) -- (0.5,8);
          \draw (-0.5,1.7) node {$u_1$};
          \draw (2,3.2) node {$u_2$};
          \draw (2,-1) node {$\mathcal A \coloneqq \mathcal A^5$};

          \draw[thick] (6,0)--(6,7)--(8.5,7)--(8.5,6)--(7.5,6)--(7.5,5)--(7,5)--(7,0)
          (7.25,7)--(7.25,8)
          (8.5,6)--(9.5,6)--(9.5,1)--(10,1)--(10,0)
          (9.5,1)--(9,1)--(9,0)
          (7,5)--(8,5)--(8,0);
          \draw (8,-1) node {$\mathcal T_u(\mathcal A) \atop u < u_1,u_2$};

          \draw[thick] (12, 0) -- (12, 2.3) -- (13, 2.3) -- (13,0)
          (12.5, 2.3) -- (12.5, 5) -- (14, 5) -- (14,0)
          (13.5, 5) -- (13.5, 6) -- (15.5, 6) -- (15.5, 1)
          (15, 0) -- (15,1) -- (16,1) -- (16,0)
          (14.5, 6) -- (14.5, 8);
          \draw (14,-1) node {$\mathcal T_u(\mathcal A) \atop u_1 < u < u_2$};

          \draw[thick] (18, 0) -- (18, 2.3) -- (19, 2.3) -- (19,0)
          (18.5, 2.3) -- (18.5, 6) -- (20.5, 6) -- (20.5, 4)
          (20, 0) -- (20, 4) -- (21.5, 4) -- (21.5, 1)
          (21, 0) -- (21,1) -- (22,1) -- (22,0)
          (19.5, 6) -- (19.5, 8);
          \draw (20,-1) node {$\mathcal T_u(\mathcal A) \atop u_1, u_2 < u$};

          \draw[dotted] (4,1) -- (21,1)
          (1,2.3) -- (19, 2.3)
          (3, 4) -- (20,4)
          (1, 5) -- (14,5)
          (2, 6) -- (20,6)
          (2, 7) -- (8, 7);
        \end{tikzpicture}
        \caption{An example of an ARG $\mathcal A$ is given for $N =5$
          particles. From the resulting graph, where splitting events
          are marked by $u_1$ and $u_2$ (with $u_1<u_2$), trees can be
          read off by using right and left particle at these splitting
          events. This results in three (realizations of) $5$-Kingman
          coalescents.  \label{fig:arg}}
\end{figure}

\begin{remark}[Interpretation]
  Clearly, within the above definition, some biological interpretation
  can be given.
  \begin{enumerate}
  \item The set $\mathbbm G$ is the \emph{genome}, i.e.\ the set of
    all loci (for any individual within the population). An element
    $u\in\mathbbm G$ is called a \emph{locus}.
  \item The parameter $\rho$ is the \emph{recombination coefficient}
    per unit length.
  \item The set $[N]$ represents $N$ individuals sampled from a
    population and the particles within $\mathcal A_t$ form the
    ancestry of the individuals from $\mathcal{A}_0$ at time $t$ in
    the past.
  \item A coalescence event within $\mathcal A$ indicates joint
    ancestry.
  \item Instead of talking about \emph{left} and \emph{right}
    particles to follow within the ARG, the biological language would
    rather suggest to talk about \emph{upstream} and \emph{downstream}
    genomic sequences.
  \end{enumerate}
\end{remark}

\begin{remark}[ARG as a limiting graph, single crossovers]
  \leavevmode
  \begin{enumerate}
  \item The ARG arises as a limiting object within finite Moran models
    of population size $\widetilde N$ as $\widetilde N\to\infty$. In
    the model with recombination a finite population of $\widetilde N$
    individuals, each carrying a genetic material indexed by
    $\mathbbm G$, undergoes the following dynamics:
    \begin{enumerate}
    \item Every (unordered) pair of individuals \emph{resamples} at
      rate~1, that is, one individuals dies and is replaced by an
      offspring of the other individual. The offspring carries the
      same genetic material as the parent.
    \item At rate $\rho/\widetilde N$, every (unordered) pair $\{\ell,j\}$ of
      individuals \emph{resamples with recombination}, that is, a
      resampling event occurs, individual $j$ dies, say, a third
      individual $r$ and a random $U$, distributed uniformly on
      $\mathbbm G$, is chosen. Then, $j$ is exchanged by an
      individual, which carries genetic material $[a,U)$ from $\ell$
      and $[U,b]$ from $r$.
    \end{enumerate}
    When considering the history of a sample of $N \ll \widetilde N$
    individuals, two things can happen: First, if a resampling event
    of two individuals within the sample is hit, these individuals
    find a common ancestor, and their ancestral lines coalesces.
    Second, if a line hits a resampling event with recombination, the
    history of its genetic material is split at the corresponding $U$,
    and follows along two different lines. (Note that this happens at
    rate $(\widetilde N-1)\rho/\widetilde N \approx \rho$.) These two
    lines have a high chance to be outside the sample of $N$ lines if
    $N \ll\widetilde N$. As we see, as $\widetilde N\to\infty$, the
    ancestry is properly described by the ARG as in
    Definition~\ref{def:ARG}
  \item We assume here only \emph{single crossovers}, i.e.\ the mix of
    the genetic material of $\ell$ and $r$ is exactly as just
    described (rather than taking e.g.\ $[a,U_1] \cup (U_2, b]$ from
    $\ell$ and $(U_1, U_2]$ from $r$ for some random variables
    $a\leq U_1 < U_2\leq b$.)
  \end{enumerate}
\end{remark}

\subsection{Trees derived from an ARG}
\label{S:22}

In this section we describe sets of trees that can be read off from an
ARG and discuss some of their properties as well as different
constructions of the ARG. A construction of the ARG along the genome
(see Remark~\ref{rem:ARG_genome}) will be particularly useful in the
sequel and will be explained in more detail in
Section~\ref{sec:constr-along-genome}.

\begin{definition}[Genealogical trees read off from $\mathcal A$\label{def:Tu}]
  Let $\mathcal A = (\mathcal A_t)_{t\geq 0}$ be an ancestral
  recombination graph and $\mathcal B \subseteq [N]$ be a subset of
  the initial particles. For $u\in\mathbb G$ we read off the random
  tree
  $\mathcal T_u \coloneqq \mathcal T_u^{\mathcal B} \coloneqq \mathcal
  T_u^{\mathcal B}(\mathcal A)$
  (in the case $\mathcal B = [N]$, we also write
  $\mathcal T_u \coloneqq \mathcal T_u^{N} \coloneqq \mathcal
  T_u^{N}(\mathcal A)$)
  as follows:
  \begin{enumerate}[(i)]
  \item Start with particles in the set $\mathcal B$ and follow
  particles along $\mathcal A$.
  \item Upon coalescence events within $\mathcal A$, followed
    particles are merged as well. If a coalescence event within
    $\mathcal A$ only involves a single followed particle, continue to
    follow the coalesced particle.
  \item Upon a splitting event, consider its mark $U$. If $u\leq U$,
    follow the left particle, and if $u>U$, follow the right particle.
  \end{enumerate}
  We denote by
  $\mathcal T_{u,t} \coloneqq \mathcal T_{u,t}^{\mathcal B} \coloneqq
  \mathcal T_{u,t}^{\mathcal B}(\mathcal A)$
  the set of particles in $\mathcal T_u$ at time $t$. Furthermore we
  denote the root of $\mathcal T_u$ by $\bullet_u$.
\end{definition}

\begin{remark}[($N$-)coalescent\label{rem:Ncoal}]
  We will frequently use the notion of an $N$(-Kingman)-coalescent.
  This is a random tree arising by the following particle picture:
  Starting with $N$ particles, each pair of particles coalesces
  exchangeably at rate $1$. (Alternatively, we may say that the total
  coalescence rate when there are $k$ particles is $\binom k 2$ and
  upon a coalescence event, a random pair is chosen to coalesce.) The
  tree is stopped when reaching a single particle which we denote by
  $\bullet$ in the sequel. It is well-known (see also
  Example~\ref{ex:King}) that this random tree converges as
  $N\to\infty$ to the Kingman's coalescent.
\end{remark}

\begin{remark}[Properties of $\mathcal T_u^{\mathcal B}(\mathcal A)$]
  \leavevmode
  \begin{enumerate}
  \item Since $\mathcal A$ is stopped upon reaching a single particle,
    and no splits occur within $\mathcal T_u$, the latter is certainly
    a tree. In particular, its root may or may not be identical to the
    node of the stopping particle within $\mathcal A$.
  \item Note that for $M = \#\mathcal B$, each tree $\mathcal
    T_u^{\mathcal B}$ is an $M$-coalescent. Indeed, by exchangeability
    within Kingman's coalescent, any two particles within this tree
    coalesce at rate~1, independently of all others.
  \end{enumerate}
\end{remark}

\begin{remark}[Unused branches of ARG\label{rem:arg-branches}] If $R$
  is the number of recombination events in an ARG $\mathcal A$, then
  we can bound the number of different trees in $(\mathcal
  T^N_u(\mathcal A))_{u\in\mathbbm G}$. When following the left and
  right branches at each recombination point, we find $2^R$ different
  trees. However, since the $R$ recombination points have marks
  $U_1,\dots,U_R$, there are at most $R+1$ different trees arising from
  $u<U_{(1)}, U_{(i)} \leq u < U_{(i+1)}, i=1,\dots,R-1$ and $u\geq
  U_{(R)}$ (where $U_{(i)}$ is the $i$th order statistic of
  $U_1,\dots,U_R$). However, it is possible that a branch within
  $\mathcal A$ which is only followed when considering $u\in (v,b]$
  (for some $v\in\mathbbm G$) carries a recombination event with mark
  $U<v$. In this case, $\mathcal T^N_{v-} = \mathcal T^N_{v+}$ which
  reduces the number of different trees. For a lower bound of the
  number of different trees with $R$ recombination events in $\mathcal
  A$, we find a minimum of two different trees within $(\mathcal
  T^N_u(\mathcal A))_{u\in\mathbbm G}$ if $R>0$.

  This somewhat inefficient procedure of generating recombination
  events which do not take effect on the level of trees has the
  advantage of mathematical clarity and has been used by
  \cite{GriffithsMarjoram1997}. It is also possible to allow only
  recombination events which are used when reading off the trees
  $(\mathcal T^N_u(\mathcal A))_{u\in\mathbbm G}$;
  see~\cite{Hudson1983}. The latter procedure has the advantage of
  being more efficient in simulations.
\end{remark}

\begin{remark}[Construction of $(\mathcal T_u^N)_{u\in\mathbbm G}$
  along the genome]
  Instead of constructing the process $(\mathcal T_u^N)_{u\in\mathbbm
    G}$ from the present to the past along the ARG $\mathcal A$,
  \cite{WiufHein1999} have shown that there is also a construction
  along the genome. We will recall this approach together with
  approximations of $(\mathcal T_u^N)_{u\in\mathbbm G}$ related to
  this construction in Section~\ref{sec:constr-along-genome}.
\end{remark}

\begin{remark}[Outlook on Theorem~\ref{T1}\label{rem:T1}]
  Before we go on with introducing more objects needed to formulate
  our main results let us give an outlook on one of them.
  \begin{enumerate}
  \item Our goal is to study
    \begin{align}
      \label{eq:goal}
      \text{convergence of the process }(\mathcal
      T^{N}_u)_{u\in\mathbbm G} \text{ as $N \to\infty$.}
    \end{align}
    Since $\mathcal T^{N}_u$ is an $N$-coalescent for all
    $u\in\mathbbm G$, and as the convergence of the $N$-coalescent to
    Kingman's coalescent as $N\to\infty$ is well-known, convergence of
    finite-dimensional distributions in~\eqref{eq:goal} is not
    surprising. However, we will also show tightness of $\{(\mathcal
    T^{N}_u)_{u\in\mathbbm G}: N \in \mathbbm N\}$ in the space of
    c\`adl\`ag paths. This requires to define a proper topology on the
    space of trees, which we will do in the next section.
  \item In our formulation of Theorem~\ref{T1}, two different sets of
    trees derived from an ARG will arise:
    \begin{enumerate}
    \item For $\mathcal A^N$, we will consider
      $$\{\mathcal T_u^{N}(\mathcal A^N): u\in \mathbb G\},$$ which is the set of
      all trees with $N$ leaves from an $N$-ARG.
    \item For $\mathcal A^n$ and $n_1,\dots,n_j \in \mathbbm N$ with
      $n_1 + \cdots + n_j = n$, consider a partition $\{\mathcal B_1,
      \dots, \mathcal B_j\}$ of $[n]$ with $\# \mathcal B_1 =
      n_1,\dots, \#\mathcal B_j = n_j$ and $u_1,\dots,u_j \in \mathbbm
      G$. Then, we consider the trees
      $$ \{\mathcal T_{u_i}^{\mathcal B_i}(\mathcal A^n): i=1,\dots,j\}.$$
      These trees arise when considering an $n$-ARG and partition its
      initial particles into the sets $\mathcal B_1,\dots,\mathcal B_j$
      and following their ancestry. See
      Figure~\ref{fig:arg-neigb-trees} for an example of resulting
      trees and their  interaction with each other.
    \end{enumerate}
  \end{enumerate}
\end{remark}
\begin{figure}
  \centering
  \begin{tikzpicture}[xscale=0.9,yscale=0.9]
    \draw[very thick,black] (0,0) -- (0,1) -- (1,1) -- (1,0)
                            (0.5,1) -- (0.5,2.8)--(0,2.8)--(0,4) --
                            (0.5,4) -- (0.5,5) --(1.5,5) -- (1.5,5.6)
                            (2,0) -- (2,1.5) -- (2.45,1.5) --
                            (2.45,4.1) -- (3.5,4.1)--(3.5,5) -- (1.5,5);

    \draw[very thick,lightgray] (3,0) -- (3,1.5) -- (2.55,1.5) --
                                (2.55,4) -- (3.6,4) -- (3.6,5.1) -- (1.6,5.1)
                                (4,0)  -- (4,2.4) -- (2.69,2.4)
                                (2.31,2.4)--(0.6,2.4) --(0.6,2.8) --
                                (1,2.8) -- (1,4) -- (0.6,4) -- (0.6,4.9) --
                                (1.6,4.9) -- (1.6,5.6)
                                (5,0) -- (5,4) -- (3.6,4);
    \draw[very thick,lightgray] (2.3,2.4) to[out=45,in=135] (2.7,2.4);

    \draw (0,-.5) node {$1$};
    \draw (1,-.5) node {$2$};
    \draw (2,-.5) node {$3$};
    \draw (3,-.5) node {$4$};
    \draw (4,-.5) node {$5$};
    \draw (5,-.5) node {$6$};

  \end{tikzpicture}
  \caption{Two (interacting) trees with three leaves each read off
    from a joint ARG $\mathcal A^{6}$ starting with disjoint sets of
    leaves. The black tree (at locus $u=0$) is $\mathcal
    T_0^{\{1,2,3\}}(\mathcal A^6)$, while the gray tree (at locus
    $u=v$) is $\mathcal T_v^{\{4,5,6\}}(\mathcal A^6)$.}
  \label{fig:arg-neigb-trees}
\end{figure}
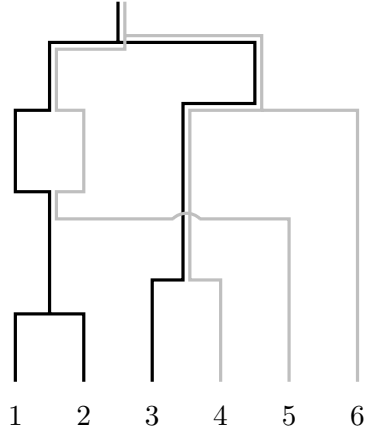

\subsection{Space of metric measure spaces}
\label{S:23}
Any (finite or infinite) genealogical tree can be encoded by an
(ultra-) metric space $(X,r)$ where $X$ is the set of leaves and $r$
is the genealogical distance. If we equip the set of leaves with a
sampling probability measure we obtain a \emph{metric measure space}
$(X,r,\mu)$. Spaces of metric measure spaces were introduced and their
topological properties were studied in \citet{Gromov2007}\footnote{The
  first edition of this book appeared in 1999. An even older french
  version from 1981 did not contain the chapter about metric measure
  spaces.} and \citet{GrevenPfaffelhuberWinter2009}. We now recall the
space of (isometry classes of) metric measure spaces $\mathbbm M$, the
Gromov-weak topology on $\mathbbm M$, and polynomials, which form a
convergence determining algebra of functions

\begin{definition}[mm-spaces]
  \leavevmode
  \begin{enumerate}
  \item A \emph{metric measure space (mm-space)} is a triple
    $(X,r,\mu)$ where $(X,r)$ is a complete and separable metric space
    and $\mu \in \mathcal M_1(X)$ with supp$(\mu)=X$. Two mm-spaces
    $(X_1,r_1,\mu_1)$ and $(X_2,r_2,\mu_2)$ are called
    \emph{measure-preserving isometric} if there exists an isometry
    $\varphi$ between $X_1$ and $X_2$ so that $\mu_2 = \varphi_\ast
    \mu_1$.
  \item Being measure preserving isometric is an equivalence relation
    and we denote the equivalence class of $(X,r,\mu)$ by
    $\overline{(X,r,\mu)}$ and write
    \begin{align}
      \label{eq:state-space}
      \mathbbm M \coloneqq \bigl\{\smallx= \overline{(X,r,\mu)} :
      (X,r,\mu) \text{ is a mm-space} \bigr\}
    \end{align}
    for the \emph{space of measure preserving isometry classes of
      mm-spaces}.
  \end{enumerate}
\end{definition}

In order to define a topology on $\mathbbm M$, we use the notion of
polynomials.

\begin{definition}[Distance matrix distribution, polynomials and the
  Gromov-weak topology]
  Let $(X,r,\mu)$ be a mm-space and $\smallx =
  \overline{(X,r,\mu)}$.
  \begin{enumerate}
  \item We define the function $R \coloneqq R^{(X,r)}: X^{\mathbbm N} \to
    \mathbbm R_+^{{\binom{\mathbbm N}{2}}}$ by $R(\underline x) =
    (r(x_i, x_j))_{1\leq i < j}$ and define the \emph{distance matrix
      distribution}
    $$ \nu^{\smallx}  = R_\ast \mu^{\otimes \mathbbm N} \in \mathcal{M}_1
    \Bigl(\R_+^{\binom \N 2}\Bigr).$$
  \item A function $\Phi: \mathbbm M \to \mathbbm R$ is called
    \emph{polynomial} if there is a bounded measurable function $\phi:
    \R_+^{\binom {\mathbbm N} 2} \to \R$, depending only on finitely
    many coordinates, such that for all $\smallx =
    \overline{(X,r,\mu)} \in \mathbbm M$ we have
    \begin{align}
      \label{eq:polynom}
      \Phi (\smallx) = \langle \mu^{\otimes \mathbbm N},\phi \circ
      R\rangle = \langle \nu^\smallx, \phi\rangle.
    \end{align}
    The smallest $n$ for which there is a function $\phi$ which
    depends only on coordinates $(r_{ij})_{1\leq i<j\leq n}$ so that
    \eqref{eq:polynom} holds is called the \emph{degree of the
      polynomial $\Phi$}. We then write $\Phi^{n,\phi}$ instead of
    $\Phi$ to stress the dependence on $n$ and $\phi$. The space of
    bounded continuous polynomials is denoted
    \begin{align}
      \Pi^0 \coloneqq \Big\{\Phi^{n,\phi}: n\in\mathbbm N, \phi \in \mathcal
      C_b(\mathbb R_+^{\binom{\mathbbm N}{2}})\Big\}.
    \end{align}
    (Sometimes we will abuse notation and write $\phi((r_{ij})_{1\leq
      i<j\leq n}) \coloneqq \phi((r_{ij})_{1\leq i<j})$.)
  \item The smallest topology on $\mathbbm M$ for which all functions
    $\Pi^0$ are continuous is called the \emph{Gromov-weak
      topology}. For this topology, $\smallx_n
    \xrightarrow{n\to\infty}\smallx$ if and only if $\nu^{\smallx_n}
    \xRightarrow{n\to\infty} \nu^\smallx$.
  \end{enumerate}
\end{definition}

\begin{remark}[Some properties of polynomials]
  \leavevmode
  \begin{enumerate}
  \item We stress that for $\smallx = \overline{(X,r\mu)}$, the
    measure $R_\ast \mu^{\otimes \mathbbm N}$ does not depend on the
    representative and hence $\nu^\smallx$ is well-defined.
  \item Given two polynomials $\Phi^{n,\phi}$ and $\Psi^{m,\psi}$ one
    can show that the product $\Phi^{n,\phi} \cdot \Psi^{m,\psi}$ is a
    polynomial of degree $n+m$; see Remark~2.8(i) in
    \cite{GrevenPfaffelhuberWinter2013}. The space $\Pi^0$ is an
    algebra which separates points; see Section~$3\frac12.5.$ in
    \cite{Gromov2007} and Proposition~2.6 in
    \cite{GrevenPfaffelhuberWinter2009}.
  \item The space $\mathbbm M$ equipped with the Gromov-weak topology
    is Polish. Later in Section \ref{S:31} we will give the
    Gromov-Prohorov metric on $\mathbbm M$, which is complete and
    metrizes the Gromov-weak topology (see Theorem~5 and
    Proposition~5.6 in \cite{GrevenPfaffelhuberWinter2009}). We will
    also give the Gromov-Hausdorff metric and other metrics on
    $\mathbbm M$, which we will need in the formulation and proof of
    Theorem~\ref{T1}.
  \end{enumerate}
\end{remark}

\begin{example}[Kingman coalescent tree as a metric measure
  space\label{ex:King}]
  Consider \label{ex:kingman-tree} the $N$-coalescent from
  Remark~\ref{rem:Ncoal}. Let $K^N=[N]=\{1,\dots,N\}$ be the set of
  leaves and let the metric $r^N$ be the usual tree distance, i.e.\
  $r^N(i,j)$ is twice the time to the MRCA of $i$ and $j$,
  $1\leq i,j\leq N$. Finally, let $\mu^N$ be the uniform measure on
  $K^N$. Then $\mathpzc K^N \coloneqq \overline{(K^N,r^N,\mu^N)}$ is
  an equivalence class of a metric measure space. Furthermore, by
  Theorem~4 in \cite{GrevenPfaffelhuberWinter2009} there exist an
  $\mathbbm M$-valued random variable $\mathpzc K^\infty$ such that
  \begin{align}
    \label{eq:kingman-tree}
    \mathpzc K^N \xRightarrow{N\to\infty} \mathpzc{K}^\infty.
  \end{align}
  The limiting object $\mathpzc{K}^\infty$ is called the \emph{Kingman
    measure tree}.
\end{example}

\subsection{Metrics on metric (measure) spaces}
\label{S:31}
We now recall the definitions of several distances on $\mathbbm M$
that we will use in the sequel.  While the Hausdorff distance is a
metric on closed subsets of a metric space, the Prohorov and total
variation distances are metrics on probability measures on a metric
space. All three distances can be turned into distances on $\mathbbm
M$.

\begin{definition}[Hausdorff, Prohorov and total variation
  distances\label{def:HausPro}]
  Let $(Z,d)$ be a metric space and $\mu_1, \mu_2 \in\mathcal M_1(Z)$.
  \begin{enumerate}
  \item The \emph{Hausdorff distance} between two subsets $A, B
    \subseteq Z$ is defined by
    \begin{align}
      \label{eq:d-H}
      d_{\mathrm H}(A,B) \coloneqq \inf\{\varepsilon>0: A\subseteq
      B^\eps, B\subseteq A^\eps\}.
    \end{align}
    where for $A\subseteq Z$\,
    $$ A^\eps\coloneqq \{ x \in Z: d(x,A) < \eps \} = \{ z \in Z:
    \exists y \in A, d(y,z) < \eps\}.$$
  \item The \emph{Prohorov distance} of $\mu_1, \mu_2$ is defined by
    \begin{align}
      \label{eq:pr}
      d_{\mathrm{P}}(\mu_1,\mu_2) \coloneqq \inf\{\eps >0 : \mu_1(F)
      \le \mu_2(F^\eps) + \eps, \forall F \subseteq Z, \text{
        closed}\, \}.
    \end{align}
  \item The \emph{total variation distance} of $\mu_1, \mu_2$ is
    defined by
    \begin{align}
      \label{eq:tv}
      d_{\mathrm{TV}}(\mu_1,\mu_2) \coloneqq \sup_{A \in \mathcal B(Z)}
      \abs{\mu_1(A) - \mu_2(A)}.
    \end{align}
  \end{enumerate}
\end{definition}

\begin{remark}[Total variation distance\label{rem:dTV}]
  \leavevmode
  \begin{enumerate}
  \item If $Z$ is finite, the total variation distance of probability
    measures $\mu_1$ and $\mu_2$ on $Z$ is given by
    \begin{align}
      \label{eq:tv-finite}
      d_{\mathrm{TV}}(\mu_1,\mu_2) = \frac12 \sum_{z \in Z}
      \, \abs{\mu_1(\{z\}) -\mu_2(\{z\})}.
    \end{align}
  \item Recall that the Prohorov distance of two probability measures
    is bounded by their total variation distance.
  \end{enumerate}
\end{remark}

\noindent
For all three notions just defined, we now recall the corresponding
``Gromov-versions'' which are (semi-) metrics on $\mathbbm M$. The
idea is always to find an optimal isometric embedding into a third
metric space and compute there the usual distance of the images of the
spaces and measures.

\begin{definition}[Gromov distances\label{def:Grom}]
  Let $\smallx_1 = \overline{(X_1, r_1, \mu_1)}$ and $\smallx_2 =
  \overline{(X_2, r_2, \mu_2)}$ be mm-spaces. Moreover, let $\varphi_1:
  X_1 \to Z$ and $\varphi_2: X_2\to Z$ be isometric embeddings into a
  common complete and separable metric space $(Z,d)$.
  \begin{enumerate}
  \item The \emph{Gromov-Hausdorff distance} of $\smallx_1$ and
    $\smallx_2$ is defined by
    \begin{align*}
      d_{\mathrm{GH}}(\smallx_1, \smallx_2) \coloneqq \inf_{\varphi_1,\varphi_2,Z}
      d_{\mathrm H}(\varphi_1(X_1),\varphi_2(X_2)).
    \end{align*}
  \item The \emph{Gromov-Prohorov metric} of $\smallx_1$ and
    $\smallx_2$ is defined by
    \begin{align*}
      d_{\mathrm{GP}}(\smallx_1, \smallx_2) \coloneqq \inf_{\varphi_1,\varphi_2,Z}
      d_{\mathrm{P}}((\varphi_1)_\ast \mu_1, (\varphi_2)_\ast \mu_2).
    \end{align*}
  \item The \emph{Gromov total variation distance} of $\smallx_1$ and
    $\smallx_2$ is defined by
    \begin{align*}
      d_{\mathrm{GTV}}(\smallx_1,\smallx_2) \coloneqq \inf_{\varphi_1,\varphi_2,Z}
      d_{\mathrm{TV}}((\varphi_1)_\ast \mu_1, (\varphi_2)_\ast \mu_2).
    \end{align*}
  \end{enumerate}
\end{definition}

\begin{remark}[Properties\label{rem:TC_properties}]
  Let us recall or rather state some known and some obvious properties
  of the distances just introduced.
  \begin{enumerate}
  \item The distances
    $d_{\mathrm{GH}}, d_{\mathrm{GP}}, d_{\mathrm{GTV}}$ are
    well-defined. As can be seen by considering isometries between
    elements of one isometry class the distances do not depend on the
    representative.
  \item Since the Gromov-Hausdorff distance only uses the metric
    spaces $(X_1, r_1)$ and $(X_2, r_2)$ but not the measures $\mu_1,
    \mu_2$, it is only a pseudo-metric on $\mathbbm M$.
  \item According to Lemma~5.4 and Proposition~5.6 in
    \citet{GrevenPfaffelhuberWinter2009} the Gromov-Prohorov metric
    $d_{\mathrm{GP}}$ is indeed a metric on $\mathbbm M$ and the
    metric space $(\mathbbm M,d_{\mathrm{GP}})$ is complete and
    separable. Moreover, it metrizes the Gromov-weak topology by
    Theorem~5 of \citet{GrevenPfaffelhuberWinter2009}.
  \item Since the Prohorov distance is bounded by the total variation
    distance, we also find that
    \begin{align}
      \label{eq:grpr-grtv}
      d_{\mathrm{GP}}(\smallx_1,\smallx_2) \le
      d_{\mathrm{GTV}}(\smallx_1,\smallx_2).
    \end{align}
    This will be useful later since the total variation distance is
    usually easier to compute or estimate than the Prohorov distance.
  \end{enumerate}
\end{remark}

\section{Main results}
We now formalize the trees $\{\mathcal T_u^N(\mathcal A^N):
u\in\mathbbm G\}$ as mm-spaces. Our results, then, are dealing with
these $\mathbbm M$-valued random processes. In particular,
Theorem~\ref{T1} studies convergence as $N\to\infty$. Since $\mathcal
T_u^N(\mathcal A^N)$ is an $N$-coalescent for all $u\in\mathbbm G$,
the resulting process is stationary. It is even mixing, as
Theorem~\ref{T2} shows.

\begin{definition}[$\mathcal T_u$ as an mm-space\label{def:Tumm}]
  Let $\mathcal A = \mathcal A^N(\mathbb G)$ for $N\in\mathbbm N$ and
  $\mathbbm G = [a,b]$ with $a<b$ be an $N$-ARG. For $u\in\mathbbm G$
  and $\mathcal B \subseteq [N]$, let $\mathcal T_u^{\mathcal
    B}(\mathcal A)$ be as in Definition~\ref{def:Tu}. As in
  Example~\ref{ex:King}, let $r$ be the usual tree-distance and $\mu$
  the uniform measure on $\mathcal B$. Then, $(\mathcal B, r, \mu)$ is
  a metric measure space. Its isometry class will be denoted $\mathpzc
  T_u \coloneqq \mathpzc T_u^{\mathcal B} \coloneqq \mathpzc T_u^{\mathcal
    B}(\mathcal A)$ in the sequel. If $\mathcal B = [N]$ we write
  $\mathpzc T_u \coloneqq \mathpzc T_u^{N} \coloneqq \mathpzc T_u^{N}(\mathcal A)$.
\end{definition}

\noindent
In Theorem~\ref{T1}, we need the notion of the variation of a function
which we briefly recall.

\begin{remark}[Variation]
  Let $(E,r)$ be a metric space and $f:I \to E$ for $I \subset
  \R$. The \emph{variation of $f$ with respect to $r$} on
  subintervals $[a,b] \subset I$ is defined by
  \begin{align}
    \label{eq:TV-f}
    \mathrm{V}_a^b (f) \coloneqq \sup \Bigl\{ \sum_{i=1}^k
    r(f(t_i),f(t_{i-1})) : \; k \in \N, \; a = t_0 < t_1 < \dots <
    t_k = b  \Bigr\}.
  \end{align}
\end{remark}

\begin{theorem}[Convergence of $N$-ARGs\label{T1}]
  Let $\mathpzc T^N\coloneqq(\mathpzc T_u^{N}(\mathcal A))_{u\in\mathbb G}$
  be as in Definition~\ref{def:Tumm}. Then, $\mathpzc T^N
  \xRightarrow{N\to\infty} \mathpzc T$ on $\mathcal D_{\mathbbm
    M}(\mathbbm G)$ for some process $\mathpzc T$. The (law of the)
  process $\mathpzc T = (\mathpzc T_u)_{u\in\mathbbm G}$ is
  uniquely given as follows:\\
  For each $j\in\mathbbm N$, $u_1,\dots,u_j\in\mathbbm G$, $n_1,\dots,n_j
  \in \mathbbm N$, let $\mathcal T_{u_i}^{\mathcal B_i}$ be as in
  Remark~\ref{rem:T1} and $\underline{\underline R}_i$ be the
  distances of leaves $\mathcal B_i$ within $\mathcal
  T_{u_i}^{\mathcal B_i}$. Then, for $\Phi_i = \Phi^{n_i, \phi_i}
  \in\Pi^0, i=1,\dots,j$,
  \begin{align}\label{eq:T11}
    \mathbb E[\Phi_1(\mathpzc T_{u_1}) \cdots \Phi_j(\mathpzc T_{u_j})
    ] = \mathbb E[\phi_1(\underline{\underline R}_1)
    \cdots\phi_j(\underline{\underline R}_j)].
  \end{align}
  The paths of $\mathpzc T$ are almost surely of finite variation with
  respect to Gromov-Prohorov, Gromov total variation and
  Gromov-Hausdorff metrics.
\end{theorem}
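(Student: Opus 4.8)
The plan is to prove the weak convergence by the two standard ingredients --- convergence of all finite-dimensional distributions to the right-hand side of \eqref{eq:T11}, which at the same time identifies the limit and yields its uniqueness, and tightness of $\{\mathpzc T^N:N\in\mathbbm N\}$ in $\mathcal D_{\mathbbm M}(\mathbbm G)$ --- and to deduce the finite-variation statement from uniform-in-$N$ bounds on the variation of the prelimit paths together with lower semicontinuity.

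\emph{Finite-dimensional distributions and uniqueness.} Fix $j$, loci $u_1,\dots,u_j$, degrees $n_1,\dots,n_j$ and bounded measurable $\phi_1,\dots,\phi_j$, and put $n=n_1+\cdots+n_j$. By definition of a polynomial, $\Phi_i(\mathpzc T^N_{u_i})=\langle(\mu^N)^{\otimes\mathbbm N},\phi_i\circ R\rangle$ is the expectation, over $n_i$ i.i.d.\ uniform leaves of $\mathcal T^N_{u_i}$, of $\phi_i$ evaluated at their mutual distances, so $\mathbb E[\Phi_1(\mathpzc T^N_{u_1})\cdots\Phi_j(\mathpzc T^N_{u_j})]$ is the expectation of $\phi_1(\cdots)\cdots\phi_j(\cdots)$ when an independent $n_i$-sample is drawn for tree $i$. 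As $N\to\infty$ these $n$ samples are pairwise distinct with probability tending to $1$, so up to a vanishing error we may fix disjoint label sets $\mathcal B_1,\dots,\mathcal B_j\subseteq[N]$ with $\#\mathcal B_i=n_i$. The structural point is that, by exchangeability of the Kingman coalescent and the form of the splitting rates, the joint ancestry in $\mathcal A^N$ of the $n$ pooled particles of $\mathcal B_1\cup\dots\cup\mathcal B_j$ is distributed exactly as an $n$-ARG $\mathcal A^n$, followed until the pooled particles first reduce to a single one --- which happens after all the trees $\mathcal T^{\mathcal B_i}_{u_i}$ have been determined. Hence the joint law of the distance matrices $\underline{\underline R}_i$ of $\mathcal B_i$ inside $\mathcal T^{\mathcal B_i}_{u_i}(\mathcal A^N)$ does not depend on $N$ and equals the one in \eqref{eq:T11}, so letting $N\to\infty$ gives \eqref{eq:T11}, i.e.\ the moments converge to those of the explicit $\mathbbm M^j$-valued variable $(\mathpzc T^{\mathcal B_i}_{u_i}(\mathcal A^n))_{i}$. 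Since the functions $(\mathpzc x_1,\dots,\mathpzc x_j)\mapsto\Phi_1(\mathpzc x_1)\cdots\Phi_j(\mathpzc x_j)$ with $\Phi_i\in\Pi^0$ form a point-separating algebra, hence are convergence determining on the Polish space $\mathbbm M^j$, this determines the finite-dimensional distributions of every subsequential limit; and a c\`adl\`ag process is determined by its finite-dimensional distributions, which gives both the characterization of $\mathpzc T$ by \eqref{eq:T11} and its uniqueness.

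\emph{Tightness.} I would use Jakubowski's criterion on $\mathcal D_{\mathbbm M}(\mathbbm G)$. For compact containment, every $\mathpzc T^N_u$ is an $N$-coalescent and the distance of any two leaves in any $\mathcal T^N_u$ is at most twice the lifetime $T$ of $\mathcal A^N$; since $\mathbb E[T]<\infty$ (cf.\ \cite{PardouxSalamat2009}) and in fact $\sup_N\mathbb P(T>t)\to0$ as $t\to\infty$ by the birth-and-death structure, with probability close to $1$ uniformly in $N$ the whole family $\{\mathpzc T^N_u:u\in\mathbbm G\}$ lies in a fixed set of the form $\{\mathpzc x:\nu^{\mathpzc x}(r_{12}>D)\le\delta(D)\ \forall D\}$, which is relatively compact for the Gromov-weak topology by tightness of distance-matrix distributions. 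For the oscillation condition it suffices that $(\Phi(\mathpzc T^N_u))_{u\in\mathbbm G}$ be tight in $\mathcal D_{\mathbbm R}(\mathbbm G)$ for each $\Phi=\Phi^{n,\phi}\in\Pi^0$, which I would verify via Aldous's criterion: the increment of $\Phi(\mathpzc T^N_\cdot)$ over $[\sigma,\sigma+\delta]$ is at most $2\|\phi\|_\infty$ times the probability that the subtree spanned by the $n$ sampled leaves changes there, and --- again by the consistency with $\mathcal A^n$, whose number of recombination events has finite, $N$-independent mean, and because a recombination of the full tree affects the $n$-leaf subtree at a rate bounded uniformly in $N$ --- this probability is $O(\delta)$ uniformly in $N$. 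Together with compact containment this yields tightness, hence $\mathpzc T^N\Rightarrow\mathpzc T$.

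\emph{Finite variation.} Each prelimit path $u\mapsto\mathpzc T^N_u$ is piecewise constant and changes only at recombination marks, each change being a subtree-prune-and-regraft. If the pruned subtree $S$ carries a fraction $p$ of the leaves, embedding old and new trees into a common tree glued along the unchanged part shows that $d_{\mathrm{GTV}}$ --- hence $d_{\mathrm{GP}}$ by \eqref{eq:grpr-grtv} --- between them is $\le p$, whereas $d_{\mathrm{GH}}$ is controlled by the heights of the old and new attachment points of $S$, via the elementary bound $d_{\mathrm{GH}}(T,T\setminus S)\lesssim(\text{height of the attachment of }S)$ obtained from a correspondence that collapses $S$ to a single leaf. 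Summing over recombinations, $\mathbb E[\mathrm V_a^b(\mathpzc T^N;d_{\mathrm{GTV}})]$ stays bounded because at every fixed height the masses of the subtrees below the lineages present there sum to $1$, so the expected sum of the $p$'s is of order $\rho(b-a)^2\,\mathbb E[\text{height of the }N\text{-coalescent}]\le C$; and $\mathbb E[\mathrm V_a^b(\mathpzc T^N;d_{\mathrm{GH}})]$ stays bounded because the relevant attachment heights are small precisely where the tree length, and hence the recombination intensity, is large, which a direct computation with the level durations $\tau_k\sim\mathrm{Exp}(\binom k2)$ turns into a series in $k$ that converges uniformly in $N$. Passing to the limit via a Skorokhod coupling, lower semicontinuity of $f\mapsto\mathrm V_a^b(f)$ under $J_1$-convergence (only continuity points of the limit are needed to build competing partitions) and Fatou's lemma then give finite variation of the paths of $\mathpzc T$ with respect to all three metrics, almost surely. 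The hard parts will be the oscillation estimate in the tightness step and the $d_{\mathrm{GH}}$ bound above: in both, the number of recombination events diverges with $N$ (and the limiting tree has infinite total length), so one must show that all but finitely many of them are microscopic --- which is exactly what the restriction to an $n$-leaf subtree, respectively the ``deep cuts regraft only slightly'' estimate, accomplishes.
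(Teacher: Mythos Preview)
Your finite-dimensional-distribution argument is essentially the paper's: sample, decouple repeated indices as $N\to\infty$, invoke projectivity of the ARG so that the joint law of the $\underline{\underline R}_i$ is already that of an $n$-ARG, and conclude by the separating property of $\Pi^0$. Nothing to add there.

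For tightness you take a genuinely different route. The paper does \emph{not} go through Jakubowski/Aldous at all; it verifies Billingsley's moment criterion directly in the Gromov--Prohorov metric, i.e.\ it shows
\[
\sup_N\;\mathbb E\bigl[d_{\mathrm{GTV}}(\mathpzc T^N_{-h},\mathpzc T^N_{0})\cdot d_{\mathrm{GTV}}(\mathpzc T^N_{0},\mathpzc T^N_{h})\bigr]\le C h^2 .
\]
The device that makes this one line is an \emph{auxiliary distance} $d_{\mathrm{aux}}^{\bullet_0,\pm h}$ (the fraction of leaves whose path to the root of $\mathcal T^N_0$ meets a recombination mark in $[0,\pm h]$): it dominates $d_{\mathrm{GTV}}$, and---crucially---the two factors $d_{\mathrm{aux}}^{\bullet_0,-h}$ and $d_{\mathrm{aux}}^{\bullet_0,h}$ are \emph{conditionally independent given $\mathcal T^N_0$} (marks to the left and right of $0$ fall independently on the fixed tree). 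Conditioning on $\mathcal T^N_0$ then gives the product of two terms each bounded by $\rho h\cdot(\text{height})$, and the second moment of the height of an $N$-coalescent is uniformly bounded. Your Jakubowski route is not wrong, but it is heavier: compact containment has to be argued uniformly in $u$ (you implicitly need that the lifetime of $\mathcal A^N$ is tight in $N$, which is true but needs a separate check), and Aldous's stopping-time condition for the real-valued process $u\mapsto\Phi(\mathpzc T^N_u)$ is delicate because this process is not Markov along the genome---your ``probability that the $n$-leaf subtree changes on $[\sigma,\sigma+\delta]$'' is a random variable, and bounding it by $O(\delta)$ after conditioning on $\mathcal F_\sigma$ needs more than projectivity to an $n$-ARG. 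The paper's conditional-independence trick sidesteps all of this.

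For finite variation the two approaches are close in spirit but the paper's is more direct: rather than bounding $\mathbb E[\mathrm V_a^b(\mathpzc T^N)]$ and invoking lower semicontinuity under $J_1$, it proves the single-increment bound $\mathbb E[d(\mathpzc T^N_u,\mathpzc T^N_v)]\le C\,|v-u|$ uniformly in $N$ (for $d=d_{\mathrm{GTV}}$ this is again $d_{\mathrm{aux}}\le\rho|v-u|\cdot\text{height}$), passes to the limit in each summand of any partition, and concludes. For $d_{\mathrm{GH}}$ the paper proves a dedicated lemma: condition on $\mathcal T^N_u$, decompose according to the level $k$ at which a recombination occurs and the level $\ell\le k$ at which the extra line re-coalesces, bound $\mathbb P[A_{k,\ell}\mid\mathcal T^N_u]\le\rho(v-u)\,kS_k\cdot \ell S_\ell\prod_{m=\ell+1}^{k-1}e^{-mS_m}$, use $\mathbb E\prod e^{-mS_m}\le \ell/(k-2)$, and sum the resulting series in $(k,\ell)$. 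Your sketch (``deep cuts regraft only slightly'') is pointing at exactly this computation, but the $A_{k,\ell}$ decomposition is the concrete mechanism you are missing.
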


\begin{remark}[Path-properties of $\mathpzc T$\label{rem:pathprop}]
  We can ask about path-properties of the limiting process $\mathpzc
  T$. Let us give an example: Let $N_{u}^\varepsilon$ be the number of
  $\varepsilon$-balls that are needed to cover $\mathpzc T_u$. In
  other words $N_{u,}^\varepsilon$ is the number of families in
  $\mathpzc T_{u}$ some distance $\varepsilon$ from the leaves. For
  fixed $u\in \mathbbm G$, we know that $\mathpzc T_u$ is a Kingman
  coalescent and hence we have $\varepsilon N_{u}^\varepsilon
  \xrightarrow{\varepsilon\to 0} 2$ almost surely. (See e.g.\ (35) in
  \cite{Aldous1999}.) Is it also true that
  \begin{align*}
    \mathbb P (\varepsilon N_{u}^\varepsilon
    \xrightarrow{\varepsilon\to 0} 2 \; \text{for all $u \in \mathbbm
      G$} ) =1?
  \end{align*}
  Similar questions arise for well-known almost sure properties of
  Kingman's coalescent regarding the family-sizes near the leaves; see
  \cite{Aldous1999}, Chapter~4.2.
\end{remark}

\noindent
For our next result, we need to extend the ARG to $\mathbbm G =
(-\infty, \infty)$. This can be done using some projective property
(see also Lemma~\ref{l:proj2}): Let $\mathbbm G_n \uparrow (-\infty,
  \infty)$. Clearly, \eqref{eq:T11} gives the finite-dimensional
  distributions of $(\mathpzc T_u)_{u\in\mathbbm G_n}$ for every
  $n$. In particular, for $m<n$, we see that the projection of
  $(\mathpzc T_u)_{u\in\mathbbm G_n}$ to $\{u\in\mathbbm G_m\}$ is the
  same as $(\mathpzc T_u)_{u\in\mathbbm G_m}$ and therefore
  \eqref{eq:T11} defines a projective family of probability measures
  which can by Kolmogorov's extension Theorem be extended to the law
  of $(\mathpzc T_u)_{u\in\mathbbm R}$.

\begin{theorem}[Mixing properties\label{T2}]
  For $n\in\mathbbm N$ let $\Psi=\Psi^{n,\psi}$ and $
  \Phi=\Phi^{n,\phi}$ be polynomials of (at most) degree $n$ and
  $(\mathpzc T_u)_{u\in\mathbbm G}$ be the extension of the limit
  process $\mathpzc T$ from Theorem~\ref{T1} to $\mathbbm G = \mathbbm
  R$. Then, there exists a positive finite constant $C=C(n)$ such that
  \begin{align}
    \label{eq:cov-bound}
    \Abs{\E[\Psi(\mathpzc T_0) \Phi(\mathpzc T_u)] - \E[\Psi(\mathpzc
      T_0)] \E[\Phi(\mathpzc T_u)]} \le
    \frac{C}{\rho^2 u^2} \norm{\psi}_\infty \norm{\phi}_\infty , \quad
    u>0.
  \end{align}
\end{theorem}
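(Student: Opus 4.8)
The plan is to exploit the construction of the ARG along the genome and reduce the covariance bound to a statement about how long it takes for the ancestral lines relevant to locus $0$ and those relevant to locus $u$ to become independent. Since $\Psi = \Psi^{n,\psi}$ and $\Phi = \Phi^{n,\phi}$ have degree at most $n$, by \eqref{eq:T11} we may realize $\Psi(\mathpzc T_0)$ as $\psi(\underline{\underline R}_1)$ where $\underline{\underline R}_1$ are the pairwise distances among $n$ leaves $\mathcal B_1$ followed at locus $0$ in a joint ARG $\mathcal A^{2n}$, and similarly $\Phi(\mathpzc T_u) = \phi(\underline{\underline R}_2)$ using a disjoint set $\mathcal B_2$ of $n$ leaves followed at locus $u$. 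The key object is the pair of tree-valued marginals $\bigl(\mathcal T_0^{\mathcal B_1}(\mathcal A^{2n}), \mathcal T_u^{\mathcal B_2}(\mathcal A^{2n})\bigr)$; these two $n$-coalescents interact only through shared ancestral material, and one expects that for large $\rho u$ they decouple. I would introduce the event $D_u$ that, in the genome-wise construction, \emph{no} ancestral line ever carries genetic material covering both locus $0$ and locus $u$ simultaneously at any time before the respective trees have reached their roots — equivalently, that the part of the ARG determining $\mathcal T_0^{\mathcal B_1}$ and the part determining $\mathcal T_u^{\mathcal B_2}$ use disjoint branches. On $D_u$, the two trees are independent, so that $\E[\psi(\underline{\underline R}_1)\phi(\underline{\underline R}_2) \mid D_u] = \E[\psi(\underline{\underline R}_1)\mid D_u]\,\E[\phi(\underline{\underline R}_2)\mid D_u]$, and by a routine bound
\begin{align*}
  \Abs{\E[\Psi(\mathpzc T_0)\Phi(\mathpzc T_u)] - \E[\Psi(\mathpzc T_0)]\E[\Phi(\mathpzc T_u)]}
  \le 4\,\norm{\psi}_\infty\norm{\phi}_\infty\,\P(D_u^c).
\end{align*}
So the whole statement reduces to showing $\P(D_u^c) \le C(n)/(\rho^2 u^2)$.

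To estimate $\P(D_u^c)$ I would use the construction of the ARG along the genome of \cite{WiufHein1999}, as recalled in Section~\ref{sec:constr-along-genome}. In that picture one builds the genealogy at locus $0$ first — an $n$-coalescent with finitely many branches and finite total length $L_0$ (with all moments finite, since it is dominated by the total length of Kingman's coalescent on $n$ leaves) — and then moves the locus to the right; recombination events on the current tree, falling at rate proportional to $\rho$ times branch length times $du$, cause pieces of the tree to detach and find new ancestors. The event $D_u^c$ requires that some branch surviving from the locus-$0$ material is still ``in use'' at locus $u$, i.e.\ that along the genomic interval $[0,u]$ the material originally ancestral to locus $0$ has not been entirely broken off and replaced. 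Each such piece of material gets hit by a recombination breakpoint in $[0,u]$ at rate $\rho \times (\text{length of the carrying branch})$ per unit genome, so the number of genomic ``trials'' to detach it is of order $\rho u$ times the relevant branch length. Making this precise, $\P(D_u^c)$ is controlled by the probability that a certain renewal-type counting process started at rate $\sim \rho\,(\text{length})$ has not moved by genomic distance $u$; together with the finiteness of all moments of the coalescent tree length this yields a bound of the form $\E[\text{(some length)}^2]/(\rho u)^2$, i.e.\ $C(n)/(\rho^2 u^2)$. The quadratic (rather than merely exponential-in-$\rho u$, which would also suffice but is false here because the carrying branch length can be small) decay is exactly the polynomial tail one gets from a mixture of exponentials whose rates can be arbitrarily close to $0$ — this is why one only gets an inverse-square bound and not faster.

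The main obstacle is the combinatorial/bookkeeping step: precisely defining, within the Wiuf--Hein along-the-genome construction, which branches are ``relevant to locus $0$'' and which to locus $u$, and proving that $D_u$ as defined above really does imply \emph{independence} of $\mathcal T_0^{\mathcal B_1}$ and $\mathcal T_u^{\mathcal B_2}$ rather than merely a weak form of conditional decoupling. One has to be careful that the root times and topologies of the two trees are measurable with respect to disjoint portions of the driving Poisson data on $D_u$; the cleanest route is probably to couple the joint ARG on $D_u$ with two independent ARGs, one for each locus, and check that the trees read off agree under the coupling. The second, more quantitative, obstacle is controlling the tail of $\P(D_u^c)$ uniformly: one must integrate over the random length of the branches carrying the locus-$0$ material as the locus slides from $0$ to $u$, using that this length process is itself dominated (uniformly in the recombination history, and in $N$ since we are in the limit) by a quantity with finite second moment depending only on $n$. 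Once these two points are in place, the constant $C(n)$ emerges from the second moment of the total length of an $n$-coalescent, which is explicit and finite.
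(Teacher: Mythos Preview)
Your reduction via \eqref{eq:T11} to the $2n$-ARG with disjoint leaf sets $\mathcal B_1,\mathcal B_2$ is correct and matches the paper. The proposal breaks down, however, at the choice of decoupling event.

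Your event $D_u$ --- that no ancestral line in $\mathcal A^{2n}$ ever simultaneously carries material for locus $0$ and locus $u$, equivalently that the two embedded trees use disjoint branches --- does \emph{not} satisfy $\P(D_u^c)\to 0$ as $\rho u\to\infty$. At time $0$ there are $n$ single lines in $\mathcal T_0^{\mathcal B_1}\setminus\mathcal T_u^{\mathcal B_2}$ and $n$ single lines in $\mathcal T_u^{\mathcal B_2}\setminus\mathcal T_0^{\mathcal B_1}$; the first event is an inter-tree coalescence (creating a double line, hence forcing $D_u^c$) with probability $n^2/\bigl(n^2+2\binom n2\bigr)=n/(2n-1)$, independently of $\rho u$. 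For $n=2$ this is already $2/3$. So the bound $\P(D_u^c)\le C(n)/(\rho u)^2$ is simply false, and the argument cannot close. Your subsequent heuristic about branches ``surviving from the locus-$0$ material'' along the Wiuf--Hein construction appears to be reasoning about a single leaf set followed at two loci, which is a different situation from the disjoint-leaf picture that \eqref{eq:T11} hands you.

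The idea you are missing is that the mere formation of a double line (an inter-tree coalescence) does \emph{not} correlate the two trees: it records a coalescence in neither $\mathcal T_0^{\mathcal B_1}$ nor $\mathcal T_u^{\mathcal B_2}$. What correlates them is a \emph{joint coalescence}: two double lines merging, which is a simultaneous coalescence event in both trees. For large $\rho u$ a double line splits at rate $\rho u$, so having two double lines present and then merging before either splits costs two factors of $1/(\rho u)$; this is where the $1/(\rho u)^2$ comes from. The paper implements this by building an auxiliary graph $\widehat{\mathcal A}^{2n}$ identical to $\mathcal A^{2n}$ except that whenever two double lines would merge, the event is replaced by one that avoids a simultaneous coalescence. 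One then checks (i) the two trees read off $\widehat{\mathcal A}^{2n}$ are independent $n$-coalescents, (ii) $\mathcal A^{2n}$ and $\widehat{\mathcal A}^{2n}$ can be coupled to agree until the first such event, and (iii) the probability that such an event ever occurs is bounded by $\binom n2^2\cdot 2/(9+7\rho u+\rho^2 u^2)$ via a first-step analysis of a small linear system. Replacing your $D_u$ by the event ``no joint coalescence of two double lines occurs'' would put your outline back on track.
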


\begin{remark}[Dependency on $n$ \label{rem:n4}]
  In our proof we will show that \eqref{eq:cov-bound} holds with
  $C(n)=\frac{2n^4}{9 + 7\rho v + \rho^2 v^2}$.  Therefore, the bound
  is only useful in the limit $u\to\infty$ for fixed $n$. If $n \to
  \infty$ and $u$ is fixed, the trivial bound $ \Abs{\E[\Psi(\mathpzc
    T_0) \Phi(\mathpzc T_u)] - \E[\Psi(\mathpzc T_0)] \E[\Phi(\mathpzc
    T_u)]} \le 2\norm{\psi}_\infty \norm{\phi}_\infty$ holds as
  well. It would be desirable to obtain a bound similar to
  \eqref{eq:cov-bound} uniformly in $n$ and $u$, but this seems to be
  out of reach with the techniques we develop here.
\end{remark}

\section{Preliminaries}
\label{S:3}

\subsection{Construction of $(\mathpzc T_u^{N})_{u\in\mathbbm G}$
  along the genome}
\label{sec:constr-along-genome}
In \cite{WiufHein1999}, a construction of an $N$-ARG was given, which
results in the same trees $(\mathcal T_u^N)_{u\in\mathbbm G}$ (or
$(\mathpzc T_u^N)_{u\in\mathbbm G}$) in distribution as described in
Definition~\ref{def:Tu} (and Definition~\ref{def:Tumm}), but
constructs this tree-valued process \emph{along the genome}, i.e.\ by
starting at $u \in \mathbbm G$ and then letting the trees evolve when
moving along $\mathbbm G$. This construction which we recall below
will be helpful in all further proofs. See also
\cite{LeocardPardoux2010}.

\begin{definition}[$N$-ARG' along the genome\label{def:ARG'}]
  \leavevmode
  \begin{enumerate}
  \item For $a<b$, $\mathbbm G = [a,b]$ and $N\in\mathbbm N$,
    construct an evolving pair $(\mathcal G, \mathcal T) = (\mathcal
    G_n, \mathcal T_n)_{n=0,1,2,\dots}$, where $\mathcal G_n$ is a graph
    and $\mathcal T_n$ is a tree as follows:
  \item Start with an $N$-coalescent $\mathcal G_0 = \mathcal T_0$
    with set of leaves $[N]$ (which is continued indefinitely, i.e.\
    not stopped upon hitting a single line) and in each step do the
    following (with $U_0=a$):
    \begin{enumerate}
    \item Measure the length of the (vertical branches of the) graph
      $L_n = L(\mathcal G_n)$, choose a uniformly distributed point
      $X_n \in \mathcal G_n$ and an independent exponential random
      variable $\xi_n$.
    \item From $X_n$, change the graph as follows: Create a split
      event at $X_n$, and mark it by $U_n = U_{n-1} + \xi_n/(L_n
      \rho)$. The line in $\mathcal G_n$ which starts at $X_n$ is
      called the \emph{left branch} and a new \emph{right branch} is
      created. This new branch coalesces with all other branches in
      $\mathcal G_n$ at rate~1. The resulting graph is $\mathcal
      G_{n+1}$ and $\mathcal T_{n+1}$ is given by following $\mathcal
      T_n$ until the right branch at $X_n$ is hit and waiting until
      this branch coalesces back with $\mathcal T_n$. (There is a
      chance that $X_n \notin \mathcal T_n$; in this case we have
      $\mathcal T_{n+1} = \mathcal T_n$.)
    \end{enumerate}
    Stop when $U_n >b$ and set $\mathcal G = \mathcal G_n$.
  \item Let us now consider the final graph $\mathcal G$. This is a
    coalescing-splitting random graph (similar to $\mathcal A$ in
    Definition~\ref{def:ARG}) and therefore can also be considered as
    an evolving set of particles which coalesce and split, where
    splitting events are marked by some element of $\mathbbm G$ and
    are continued by a \emph{left} and \emph{right} branch. Hence, we
    can define for a subset $\mathcal B\subseteq [N]$ and
    $u\in\mathbbm G$ the random tree $\mathcal S_u \coloneqq \mathcal
    S_u^{\mathcal B} \coloneqq \mathcal S_u^{\mathcal B}(\mathcal G)$
    as in Definition~\ref{def:Tu}. Again, we set $\mathcal S_u^N
    \coloneqq \mathcal S_u^{[N]}$.
  \end{enumerate}
\end{definition}

\begin{remark}[Properties of $(\mathcal S_u^{\mathcal
    A_0})_{u\in\mathbbm G}$\label{rem:PropS}]
  \leavevmode
  \begin{enumerate}
  \item From \cite{WiufHein1999}, the graphs $\mathcal A$ and
    $\mathcal G$ have the same distribution, hence the same is true
    for the processes $(\mathcal T_u^{N})_{u\in\mathbbm G}$ and
    $(\mathcal S_u^{N})_{u\in\mathbbm G}$.
  \item Let $\mathcal L\subseteq \mathcal S_v^{N}$ be a line of length
    $\ell$ starting in some leaf $x\in [N]$ and reaching in the
    direction of the root. Then,
    $\mathcal L\subseteq \mathcal S_w^{N}$ for some $w\in\mathbbm G$
    if and only if no recombination marks $\mathcal L$ before reaching
    $w$. By construction, this happens with probability
    $e^{-\rho|w-v|\ell}$. Moreover, let $u<v<w$. Then, (given
    $\mathcal S_v^{N}$), $\mathcal L \subseteq \mathcal S_u^{N}$ with
    probability $e^{-\rho|v-u|\ell}$, independent of the event
    $\mathcal L \subseteq \mathcal S_w^{N}$.
  \item By construction, $\mathcal G$ from above is an ARG and
    therefore, its length has finite expectation. Hence, the
    construction above terminates almost surely.
  \end{enumerate}
\end{remark}

\begin{remark}[Approximating the ARG \label{rem:ARG_genome} using a
  Markov process along the genome]
  One striking feature of the construction of \cite{WiufHein1999} is
  that it allows to approximate $(\mathcal T^N_u)_{u\in\mathbbm G}$ by
  a Markov process by changing the dynamics of the construction
  $(\mathcal G_n, \mathcal T_n)$ in order to obtain a Markovian
  dynamics $\mathcal T = (\mathcal T_n)$:
  \begin{enumerate}
  \item The following approximation was used by \cite{McVean2005}:
    Instead of (a) in Definition~\ref{def:ARG'}, measure the length of
    the (vertical branches of the) tree $L_n = L(\mathcal T_n)$,
    choose a uniformly distributed point $X_n \in \mathcal T_n$ and an
    independent exponential random variable $\xi_n$. Then, instead of
    (b), change the graph as follows: Create a split event at $X_n$,
    and mark it by $U_n = U_{n-1} + \xi_n/(L_n \rho)$. Delete the
    branch which connects $X_n$ to its ancestral node from $\mathcal
    T_n$, all other lines are available for coalescence. Then, start a
    new branch in $X_n$ which coalesces with all other available
    branches rate~1. The resulting tree is $\mathcal T_{n+1}$.  In
    particular, by only allowing coalescences with $\mathcal T^N_u$,
    this approximation becomes a Markov process, also called the
    Sequentially Markov Coalescent (SMC). (In SMC', \cite{Chen2009}
    use almost the same construction but without deleting the branch
    connecting $X_n$ to its ancestral node for the set of lines
    available for coalescence, leading to a better approximation.)
  \item Another simulation software based on the Markovian Coalescent
    Simulator (MaCS) from ~\cite{Chen2009}, has all lines from the
    last $k$ genealogical trees as \emph{available} for coalescence
    after a recombination event.
  \end{enumerate}
  While approximations such as SMC and MaCS make genome-wide computer
  simulations under recombination feasible, their construction differs
  from the ARG at least for loci which are far apart. In particular,
  Theorem~\ref{T2} would not be true for these approximations.
\end{remark}

\subsection{Conditional distances of trees and first upper bounds}

To obtain useful bounds on expected tree distances introduced in the
previous section we will condition on one of the trees and use the
construction of the tree-valued process along the genome from
Section~\ref{sec:constr-along-genome}. Let us start with an
illustrative example.

\begin{figure}
  \centering
  \begin{tikzpicture}[xscale=0.7,yscale=0.7]
    \draw[very thick] (-1,0)--(-1,7)--(1.5,7) -- (1.5,5) (0,0) -- (0,2.3)
    -- (1,2.3) (1,0) -- (1,2.3) (2,0) -- (2,1) -- (2.5,1) -- (2.5,5)
    -- (0.5,5) -- (0.5,2.3) (3,0) -- (3,1)--(2.5,1) (0.5,7) --
    (0.5,7.5);

    \filldraw (2.5,4) circle (2pt);
    \draw[thick,dashed] (2.5,4) -- (3.5,4) -- (3.5,6) -- (-1,6);

    \draw[thick,dotted] (-2.5,7) -- (-1,7) (-2.5,5) -- (1.5,5)
    (-2.5,2.3) -- (1,2.3) (-2.5,1) -- (2,1) (-2.5,0) -- (3,0);

    \draw[stealth-,thick] (-2.5,7) -- (-2.5,6.3);
    \draw[-stealth,thick] (-2.5,5.75) -- (-2.5,5);
    \draw (-2.5,6) node  {$S_2$};

    \draw[stealth-,thick] (-2.5,5) -- (-2.5,3.9);
    \draw[-stealth,thick] (-2.5,3.4) -- (-2.5,2.3);
    \draw (-2.5,3.65)  node {$S_3$};

    \draw[stealth-,thick] (-2.5,2.3) -- (-2.5,1.9);
    \draw[-stealth,thick] (-2.5,1.4) -- (-2.5,1);
    \draw (-2.5,1.65) node {$S_4$};

    \draw[stealth-,thick] (-2.5,1) -- (-2.5,0.7);
    \draw[-stealth,thick] (-2.5,0.3) -- (-2.5,0);
    \draw (-2.5,0.5) node {$S_5$};

    \draw (-1,-0.25) node {$x_1$};
    \draw (0,-0.25) node {$x_2$};
    \draw (1,-0.25) node {$x_3$};
    \draw (2,-0.25) node {$x_4$};
    \draw (3,-0.25) node {$x_5$};

    \draw[very thick, lightgray] (7,0) -- (7,1) -- (7.5,1) -- (7.5,6) -- (8.15,6)
                        (8,0) -- (8,1) -- (7.5,1)
                        (8.9,0)--(8.9,5.9) -- (8.15,5.9) -- (8.15,7.1)
                        --(11.6,7.1) -- (11.6,4.9) -- (10.6,4.9) -- (10.6,2.4)
                        (9.9,0) -- (9.9,2.4) -- (11.1,2.4) -- (11.1,0)
                        (10.4,7.1) -- (10.4,7.5);

    \draw[very thick, black] (9,0)--(9,6) -- (8.25,6) -- (8.25,7) --(11.5,7) -- (11.5,5)
                 (10,0) -- (10,2.3) -- (11,2.3)
                 (11,0) -- (11,2.3)
                 (12,0) -- (12,1) -- (12.5,1) -- (12.5,5) -- (10.5,5) -- (10.5,2.3)
                 (13,0) -- (13,1)--(12.5,1)
                 (10.5,7) -- (10.5,7.5);

    \draw (7,-0.3) node {$x_4'$};
    \draw (8,-0.3) node {$x_5'$};

    \draw (9,-0.3) node {$x_1$};
    \draw (10,-0.3) node {$x_2$};
    \draw (11,-0.3) node {$x_3$};
    \draw (12,-0.3) node {$x_4$};
    \draw (13,-0.3) node {$x_5$};
  \end{tikzpicture}
  \caption{A conditional embedding of the trees $\mathcal S^5_{u}$ and
    $\mathcal S^5_{v}$ into a common tree.}
  \label{fig:distance-est}
\end{figure}
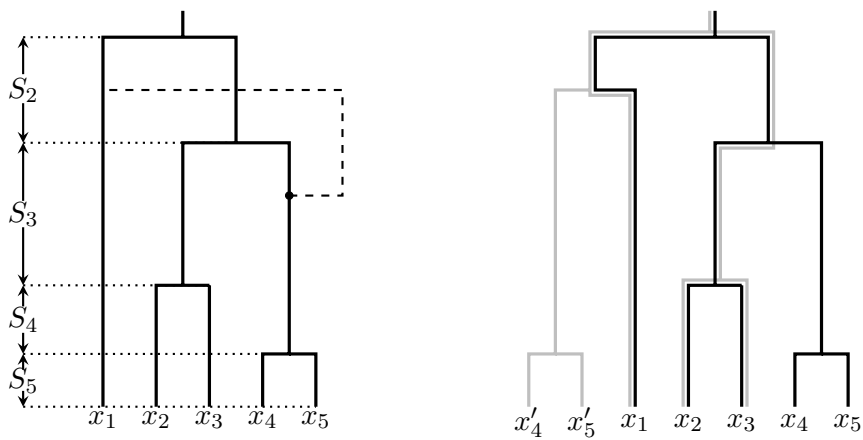

\begin{example}[A conditional embedding of two trees and upper
  bounds; Figure~\ref{fig:distance-est}]
  Given the tree $\mathcal S^5_{u}$ drawn in solid lines on the left
  of Figure~\ref{fig:distance-est}, we can read off the times
  $S_2,\dots,S_5$ which the tree spends with exactly $2,\dots,5$
  lines. These random variables are independent and each $S_k$ is
  exponentially distributed with mean $1/\binom k 2$. The black bullet
  indicates a recombination event with a mark between $u$ and $v$, and
  $\mathcal S^5_{v}$ can be read off by following the dashed line.

  On the right hand side of Figure~\ref{fig:distance-est} the trees
  $\mathcal S^5_{u}$ (black) and $\mathcal S^5_{v}$ (gray) are
  embedded into a common tree. The embedding of $\mathcal S^5_{v}$
  into the common tree is given by $x_i\mapsto x_i, i=1,2,3$ and
  $x_i\mapsto x_i', i=4,5$. Using \eqref{eq:grpr-grtv} and
  \eqref{eq:gr-tv-finite} below it is easily seen that if the trees
  $\mathcal S^5_{u}$ and $\mathcal S^5_{v}$ are equipped with uniform
  probability measure, we have
  \begin{align*}
    d_{\mathrm{GP}}(\mathcal S^5_{u}, \mathcal S^5_{v}) \leq
    d_{\mathrm{GTV}}(\mathcal S^5_{u}, \mathcal S^5_{v}) \leq 2/5.
  \end{align*}

  From the same embedding we see also that the Gromov-Hausdorff
  distance of $\mathcal S^5_{u}$ and $\mathcal S^5_{v}$, conditioned
  on $\mathcal S^5_{u}$ is bounded by the tree distance of
  $\{x_4',x_5'\}$ and $x_1$ which is twice the time of back
  coalescence of the dashed recombination line. Thus, in this
  particular case we have
  $d_{\mathrm{GH}}(\mathcal S^5_{u}, \mathcal S^5_{v})\le 2(S_2+ \dots
  + S_5)$.
  It could of course happen that the dashed line coalesces back after
  the time of the MRCA of $\mathcal S^5_{u}$. In this case the upper
  bound would be $2(S_1+ \dots + S_5)$, where $S_1$ is another
  independent exponentially distributed random variable with mean $1$.
\end{example}

\noindent
First, we explain a way to compute the Gromov total variation distance
explicitly.

\begin{remark}[How to compute $d_{\mathrm{GTV}}$\label{rem:TC_properties1}]
  If the spaces $X_1, X_2$ are finite with $\# X_1 = \#X_2=N$ and
  $\mu_i$ is the uniform distribution on $X_i, i=1,2$, then one can
  compute the Gromov total variation distance explicitly. There are
  isometric embeddings $\varphi_i$ of $X_i$ into a common finite
  metric space $(Z,d)$, $i=1,2$, so that $Z$ can be decomposed in
  three disjoint sets $Z = Z_1 \uplus Z_2 \uplus
  Z_{\textnormal{joint}}$ with the property $\varphi_i(X_i) = Z_i \cup
  Z_{\textnormal{joint}}$, $i=1,2$. The optimal embeddings are the
  ones for which $\#Z_{\textnormal{joint}}$ is maximal. Using such
  optimal embeddings we have
  \begin{multline}
    \label{eq:gr-tv-finite}
    d_{\mathrm{GTV}}(\smallx_1,\smallx_2)
    = \frac12 \mu_1(\varphi_1^{-1}(Z_1))+\frac12\mu_2(\varphi_2^{-1}(Z_2)) \\
    + \frac12 \sum_{z \in Z_{\textnormal{joint}}}
    \abs{\mu_1(\varphi_1^{-1}(\{z\}))-\mu_2(\varphi_2^{-1}(\{z\}))} =
    \frac{\# Z_1}{N} = \frac{\# Z_2}{N}.
  \end{multline}
\end{remark}

\noindent
As it turns out, to obtain upper bounds on Gromov total variation and
therefore also the Gromov-Prohorov distances it is helpful to
introduce yet another distance which is only well-defined on trees
from the processes $(\mathcal T_u^{\mathcal A_0})_{u\in\mathbbm G}$.

\begin{definition}[Auxiliary distance\label{def:aux}]
  Let $(\mathcal T_u^{N})_{u\in\mathbbm G}$ and $\mathcal A$ be as in
  Definition~\ref{def:ARG}. Recall the MRCA within $\mathcal T_u^{N}$,
  denoted by $\bullet_u$, from Definition~\ref{def:Tu}. For
  $u,v\in\mathbbm G$, we decompose $[N]$ into two subsets:
  \begin{enumerate}
  \item We denote by $[N]_{\bullet_u, v}$ the set of all $x\in [N]$
    such that the path within $\mathcal T_u^{N}$ from $x$ to
    $\bullet_u$ is \emph{not} hit by a splitting event marked with
    $U\in[u,v]$.
  \item Then $[N]_{\bullet_u, v}^c$ is the set of all $x\in [N]$ with
    such a splitting event on the path from $x$ to $\bullet_u$ in
    $\mathcal T_u^{N}$.
  \end{enumerate}
  We define
  \begin{align}
    d^{\bullet_u, v}_{\mathrm{aux}} (\mathcal T^{N}_{u}, \mathcal
    T^{N}_{v}) \coloneqq \frac{\# [N]_{\bullet_u, v}^c}{N}.
  \end{align}
\end{definition}

\begin{proposition}[Properties of the auxuliary distance]
  Let \label{P:aux} $d_{\mathrm{GTV}}$ be as in
  Definition~\ref{def:Grom}, $d_{\mathrm{aux}}$, as in
  Definition~\ref{def:aux}, $(\mathcal T_u^{N})_{u \in \mathbbm G}$ as
  in Definition~\ref{def:Tu} and
  $\mathpzc T_u^{N} = \overline{\mathcal T_u^{N}}$ as in
  Definition~\ref{def:Tumm}.
  \begin{enumerate}
  \item The distance $d_{\mathrm{aux}}$ is an upper bound for
    $d_{\mathrm{GTV}}$ in the sense that, for all $u,v\in\mathbbm G$
    \begin{align*}
      d_{\mathrm{GTV}} (\mathpzc T^{N}_{u}, \mathpzc T^{N}_{v}) \leq
      d^{\bullet_u, v}_{\mathrm{aux}} (\mathcal T^{N}_{u}, \mathcal
      T^{N}_{v})
    \end{align*}
  \item For $u,v,w \in\mathbbm G$ with $u<v<w$, the distances
    $d^{\bullet_v, u}_{\mathrm{aux}} (\mathcal T^{N}_{u}, \mathcal
    T^{N}_{v})$ and $ d^{\bullet_v, w}_{\mathrm{aux}} (\mathcal
    T^{N}_{v}, \mathcal T^{N}_{w})$ are independent given $\mathcal
    T^{N}_{v}$.
  \end{enumerate}
\end{proposition}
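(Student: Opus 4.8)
The plan is to prove the two assertions separately, both by exploiting the along-the-genome construction from Definition~\ref{def:ARG'} together with Remark~\ref{rem:PropS}, which lets us treat $\mathcal{T}^N_v$ as a fixed (conditioned) tree and describe the relationship with neighbouring trees in terms of independent recombination markings of its branches.

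For part~(1), first I would fix $u,v\in\mathbbm G$ and reduce to producing one particular pair of isometric embeddings of $\mathcal{T}^N_u$ and $\mathcal{T}^N_v$ into a common finite metric space, since $d_{\mathrm{GTV}}$ is an infimum over all such embeddings. The natural choice is dictated by the ARG: a leaf $x\in[N]$ whose path to $\bullet_u$ inside $\mathcal{T}^N_u$ carries no split event with mark in $[u,v]$ — i.e.\ $x\in[N]_{\bullet_u,v}$ — sits in exactly the same position (same ancestral line, hence same pairwise distances to all other such leaves) in both trees, so it can be placed in the ``joint'' part $Z_{\mathrm{joint}}$. Leaves in $[N]_{\bullet_u,v}^c$ are embedded into their own private copies $Z_1$ (for $\mathcal{T}^N_u$) and $Z_2$ (for $\mathcal{T}^N_v$). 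One must check this really gives a well-defined isometric embedding — that the sub-forest of $\mathcal{T}^N_u$ spanned by $[N]_{\bullet_u,v}$, with its metric, genuinely embeds isometrically into $\mathcal{T}^N_v$; this follows because an unmarked path is literally the same path in the ARG, so distances computed through common ancestors agree, while distances that would have to go through a marked segment are simply not needed since those leaves are in $Z_1$. Then by the explicit formula \eqref{eq:gr-tv-finite}, $d_{\mathrm{GTV}}(\mathpzc T^N_u,\mathpzc T^N_v)\le \#Z_1/N = \#[N]_{\bullet_u,v}^c/N = d^{\bullet_u,v}_{\mathrm{aux}}(\mathcal T^N_u,\mathcal T^N_v)$, which is the claim. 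The mild subtlety is that the optimal $d_{\mathrm{GTV}}$-embedding maximizes $\#Z_{\mathrm{joint}}$, so our embedding need not be optimal, but that only makes the bound an inequality, which is all we want.

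For part~(2), I would work conditionally on $\mathcal{T}^N_v$ and identify what each of the two auxiliary distances depends on. By the along-the-genome construction run outward from $v$ in both directions, $d^{\bullet_v,u}_{\mathrm{aux}}$ is a deterministic function of the set of split events with marks in $[u,v]$ falling on the paths from leaves to $\bullet_v$ inside $\mathcal{T}^N_v$, while $d^{\bullet_v,w}_{\mathrm{aux}}$ is the analogous function of split events with marks in $[v,w]$ on those same paths. Given $\mathcal{T}^N_v$ (hence given the branch lengths and the tree topology, in particular the paths to $\bullet_v$), these two families of split events are governed by Poisson processes on disjoint genomic intervals $[u,v]$ and $[v,w]$ — this is exactly the content of the last sentence of Remark~\ref{rem:PropS}(2): moving left and moving right from $v$ are driven by independent recombination along $\mathbbm G$. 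Since Poisson processes on disjoint intervals are independent, the two collections of marked events are conditionally independent given $\mathcal{T}^N_v$, and therefore so are the two auxiliary distances, which are measurable functions of them.

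The step I expect to be the main obstacle is the careful verification in part~(1) that the described map is a genuine isometric embedding of $\mathcal{T}^N_u$-restricted-to-$[N]_{\bullet_u,v}$ into $\mathcal{T}^N_v$ — one has to be precise that ``the path from $x$ to $\bullet_u$ is not hit by a mark in $[u,v]$'' implies the entire relevant portion of the ARG is shared between the two trees, including that $\bullet_u$ itself sits on a common ancestral line so that pairwise distances among joint leaves are preserved, and to handle the bookkeeping when the root of $\mathcal{T}^N_u$ differs from the root of $\mathcal{T}^N_v$. Part~(2) is then essentially a clean consequence of the Poisson structure of recombination along the genome in the Wiuf--Hein construction.
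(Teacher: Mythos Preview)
Your proposal is correct and follows essentially the same approach as the paper: for part~(1) you build the same explicit embedding with $Z_{\mathrm{joint}}=[N]_{\bullet_u,v}$ and two private copies of its complement, then invoke~\eqref{eq:gr-tv-finite}; for part~(2) you use the Wiuf--Hein construction centred at $v$ and the independence of the Poisson markings on $[u,v]$ and $[v,w]$ from Remark~\ref{rem:PropS}. The concern you flag about the isometry on $Z_{\mathrm{joint}}$ is handled exactly as you outline --- unmarked paths to $\bullet_u$ are literally shared in the ARG, so pairwise MRCAs (hence distances) among leaves in $[N]_{\bullet_u,v}$ coincide in both trees --- and the possible discrepancy between $\bullet_u$ and $\bullet_v$ is harmless since only distances among leaves matter.
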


\begin{proof}
  1. Since the Gromov-Total-Variation distance is defined as the
  infimum over all embeddings into a common mm-space, it suffices to
  bound the Total-Variation distance on a concrete
  embedding. Therefore, we use Remark~\ref{rem:TC_properties1} and
  write $Z = Z_1 \uplus Z_2 \uplus Z_{\text{joint}}$ with
  $Z_{\text{joint}} = [N]_{\bullet_u, v}$ and $Z_1, Z_2$ are two
  copies of $[N]_{\bullet_u, v}^c$. Distances on $Z_{\text{joint}}$
  within $\mathcal T^{N}_{u}$ and $\mathcal T^{N}_{v}$ are identical
  by construction and we see from \eqref{eq:gr-tv-finite} that
  \begin{align*}
    d_{\mathrm{GTV}}(\mathpzc T^{N}_{u}, \mathpzc T^{N}_{v}) \leq
    d_{\mathrm{TV}}(\mathcal T^{N}_{u}, \mathcal T^{N}_{v}) \leq
    \frac{\#Z_1}{N} = \frac{\# [N]_{\bullet_u, v}^c}{N} =
    d^{\bullet_u}_{\mathrm{aux}} (\mathcal T^{N}_{u}, \mathcal
    T^{N}_{v}).
  \end{align*}
  2. From Definition~\ref{def:ARG'} and Remark~\ref{rem:PropS}, we see
  that the triple $(\mathcal T_u^{N}, \mathcal T_v^{N}, \mathcal
  T_w^{N})$ can be constructed starting with $\mathcal T_v^{N}$ (with
  $a=v, b=w$). The resulting graph $\mathcal G$ is then used to again
  use the same procedure with initial state $(\mathcal G, \mathcal
  T_v^{N})$ and use the same procedure (with $a=v$ and $b=u$) this
  time moving to the left of $v$. In total, this results in marking
  $\mathcal T_v^{N}$ at rates $\rho(w-v)$ and $\rho(v-u)$
  independently. Leaves which are marked at rate $\rho(w-v)$
  ($\rho(v-u)$) until $\bullet_v$ is hit, are elements of
  $[N]_{\bullet_v,u}^c$ ($[N]_{\bullet_v, w}$). Importantly, since the
  marking on $[u,v]$ and $[v,w]$ are independent (given $\mathcal
  T_v^{N}$) -- see Remark~\ref{rem:PropS} -- the claim follows.
\end{proof}

\begin{remark}[Bound of $d_{\mathrm{GTV}}$ by $d_{\mathrm{aux}}$ not
  sharp]
  It can be easily seen that the strict inequality
  $d_{\mathrm{GTV}} (\mathpzc T^{N}_{u}, \mathpzc T^{N}_{v}) <
  d^{\bullet_u, v}_{\mathrm{aux}} (\mathcal T^{N}_{u}, \mathcal
  T^{N}_{v})$
  is possible. This happens for instance if the dashed line in
  Figure~\ref{fig:distance-est} coalesces immediately with the same
  line.
\end{remark}

\begin{proposition}[Bounds on $d_{\mathrm{aux}}$\label{P:boundsaux}]
  Let $v,w\in\mathbbm G$ with $v<w$ and
  $(\mathcal T_u^{N})_{u\in\mathbbm G}$ be as in
  Definition~\ref{def:ARG'}. For $\mathcal T_v^{N}$, let
  $S_2, S_3,\dots$ be the duration for which $2,3,\dots$ lines are
  present in the tree.
    We have
    \begin{align}
      \label{eq:conddGTV-bound}
      \E\bigl[ d_{\mathrm{aux}}^{\bullet_v, w} (\mathcal T^N_{v},\mathcal
      T^N_{w}) | \mathcal T^N_{v}\bigr] \le \rho |w-v| \sum_{k=2}^N
      S_k.
    \end{align}
\end{proposition}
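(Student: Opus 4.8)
The plan is to evaluate the conditional expectation by summing over the $N$ leaves. By Definition~\ref{def:aux},
\begin{align*}
  \E\bigl[ d_{\mathrm{aux}}^{\bullet_v, w} (\mathcal T^N_{v},\mathcal T^N_{w}) \,\big|\, \mathcal T^N_{v}\bigr]
  = \frac1N \sum_{x=1}^N \P\bigl(x \in [N]_{\bullet_v, w}^c \,\big|\, \mathcal T^N_{v}\bigr),
\end{align*}
so it suffices to bound, for each fixed leaf $x\in[N]$, the conditional probability that the path $\pi_x$ from $x$ to the root $\bullet_v$ inside $\mathcal T^N_v$ carries a splitting event with mark in $[v,w]$. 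I would first record that $\ell(\pi_x)$, the total branch length of $\pi_x$, is the same for every $x$: since $\mathcal T^N_v$ is an $N$-coalescent, all leaves sit at time $0$ and $\bullet_v$ is the MRCA, at time $\sum_{k=2}^N S_k$ in the past, and $\pi_x$ runs monotonically back in time from $x$ to $\bullet_v$, so $\ell(\pi_x)=\sum_{k=2}^N S_k$.

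Next I would bound the conditional probability itself by passing to the along-the-genome construction of Section~\ref{sec:constr-along-genome} (legitimate by the distributional identity in Remark~\ref{rem:PropS}(1)), run with $a=v$, $b=w$ and initial tree $\mathcal T^N_v$. By Remark~\ref{rem:PropS}(2), a line of $\mathcal T^N_v$ of length $\ell$ is unmarked by recombinations with mark in $[v,w]$ with conditional probability $e^{-\rho|w-v|\ell}$; applying this to $\pi_x$ as a single line of length $\ell(\pi_x)$ gives
\begin{align*}
  \P\bigl(x \in [N]_{\bullet_v, w}^c \,\big|\, \mathcal T^N_{v}\bigr) = 1 - e^{-\rho|w-v|\ell(\pi_x)} \le \rho|w-v|\,\ell(\pi_x)
  = \rho|w-v|\sum_{k=2}^N S_k ,
\end{align*}
using $1-e^{-t}\le t$. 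Plugging this into the displayed sum and averaging over $x=1,\dots,N$ yields \eqref{eq:conddGTV-bound}.

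The one step that needs genuine care is turning the single-line statement of Remark~\ref{rem:PropS}(2) into the bound for the whole path $\pi_x$ (a concatenation of several branches), since the along-the-genome graph evolves as recombination marks are added, so the ``unmarked'' events of the successive branches of $\pi_x$ are not manifestly independent. The cleanest way around this is to bound instead the \emph{expected number} of recombinations landing on $\pi_x$ with mark in $[v,w]$. In the construction of Definition~\ref{def:ARG'} the branches of $\mathcal T^N_v$ are never removed, so $\pi_x$ stays a fixed subgraph of every $\mathcal G_n$; between mark-coordinates $u$ and $u+du$ the expected number of new recombination events is $\rho L\,du$, where $L$ is the current graph length, each placed uniformly on the graph, so the expected number landing on $\pi_x$ is $\rho L\,du\cdot\ell(\pi_x)/L=\rho\,\ell(\pi_x)\,du$. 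Integrating over $u\in[v,w]$ gives expected count $\rho|w-v|\,\ell(\pi_x)$, and Markov's inequality gives $\P(x\in[N]_{\bullet_v,w}^c\mid\mathcal T^N_v)\le\rho|w-v|\,\ell(\pi_x)$ directly, sidestepping the independence question; the rest of the argument is unchanged.
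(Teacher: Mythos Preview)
Your proof is correct and follows essentially the same route as the paper: decompose the conditional expectation over leaves, observe that the path from any leaf to $\bullet_v$ has length $L=\sum_{k=2}^N S_k$, invoke Remark~\ref{rem:PropS}(2) to get conditional probability $1-e^{-\rho|w-v|L}$, and bound by $\rho|w-v|L$. Your extra paragraph justifying the application of Remark~\ref{rem:PropS}(2) to the full path $\pi_x$ (via the expected-count/Markov argument) is more careful than the paper, which simply cites the remark directly; but the underlying argument is the same.
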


\begin{proof}
  For $x\in\mathcal A_0$, let $\mathcal L_x \subseteq \mathcal T_v^N$
  be the path from $x$ to $\bullet_v$. Its length is given by the tree
  height $L \coloneqq S_2 + \cdots + S_N$. 

  Given $\mathcal T_v^N$, $x \in [N]_{\bullet_v, w}$ with
  probability $1-e^{-\rho|w-v| L}$ by Remark~\ref{rem:PropS}.2. Hence,
  by exchangeability and the definition of $d_{\mathrm{aux}}$,
  \begin{align*}
    \E\bigl[ d_{\mathrm{aux}}^{\bullet_v, w} (\mathcal T^N_{v},\mathcal
    T^N_{w}) | \mathcal T^N_{v}\bigr] & = \frac 1N \sum_{x\in [N]}
    \mathbb P(x\in [N]_{\bullet_v, w}^c) = 1-e^{-\rho|w-v| L} \leq
    \rho|w-v| L.
  \end{align*}
\end{proof}

\subsection{Projective properties of the ARG}
It is well-known that Kingman's coalescent is projective in the sense
that the tree spanned by a sample of $n$ leaves from an $N$-coalescent
has the same distribution as an $n$-coalescent. The same holds for the
ARG as we will show next.

\begin{lemma}[Projectivity in $N$ for the $N$-ARG]
  Let $\mathbbm G = [a,b]$, $\rho>0$ and $\mathcal A$ be an $N$-ARG
  (with $\mathcal A_0 = [N] = \{1,\dots,N\}$). Let $\mathcal B \subseteq
  [N]$ with $\#\mathcal B = n$ and let $\pi_{\mathcal B} \mathcal A^N$
  be the random graph which arises from the particle system starting
  with particles $\mathcal B$ and following them along $\mathcal
  A$. (Upon coalescence events within $\mathcal A^N$, followed
  particles merge as well. If a coalescence event within $\mathcal A$
  only involves a single followed particle, continue to follow the
  coalesced particle. Splitting events hitting a followed particle are
  followed as well.) Then, $\pi_{\mathcal B}\mathcal A^N$ is an
  $n$-ARG.
\end{lemma}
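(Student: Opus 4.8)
The plan is to show that the particle system $\pi_{\mathcal B}\mathcal A^N$ obtained by restricting attention to the sublineages descending from $\mathcal B$ is itself a Markov process with exactly the transition rates of Definition~\ref{def:ARG}, namely coalescence of a random pair among the current $k$ particles at rate $\binom k2$ and splitting of each particle at rate $\rho(b-a)$ with a uniformly distributed mark. Since both $\mathcal A^N$ and the claimed $n$-ARG are time-homogeneous Markov jump processes with finitely many particles (and the stopping at a single particle plays no role here because we only compare dynamics), it suffices to check that, conditionally on the current configuration of followed particles, each of the relevant transitions occurs at the right rate and that transitions which do not affect the followed particles are invisible.

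First I would record the elementary exchangeability fact underlying the coalescence rate: in $\mathcal A^N$ with $m$ particles present, each unordered pair coalesces at rate $1$. If $k$ of these $m$ particles are ``followed'' (i.e.\ lie in $\pi_{\mathcal B}\mathcal A^N$), then a coalescence of two followed particles happens at total rate $\binom k2$, a coalescence merging one followed and one unfollowed particle happens at rate $k(m-k)$ but leaves the set of followed particles unchanged in number (we simply continue following the merged particle, per the definition of $\pi_{\mathcal B}$), and a coalescence of two unfollowed particles at rate $\binom{m-k}2$ is entirely invisible. Hence the only transitions that change the followed configuration via coalescence occur at rate $\binom k2$ and pick a uniformly random pair of followed particles — exactly rule (i) of Definition~\ref{def:ARG} for an $n$-ARG started from $\mathcal B$. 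Crucially, the rate $\binom k2$ depends only on $k$, not on $m$, so the ambient number of unfollowed particles never enters the dynamics of $\pi_{\mathcal B}\mathcal A^N$.

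Next I would treat splitting. In $\mathcal A^N$ each particle splits at rate $\rho(b-a)$ with an independent uniform mark $U\in\mathbbm G$; splits of unfollowed particles are invisible to $\pi_{\mathcal B}\mathcal A^N$, and a split of a followed particle is followed (both the new left and new right particle are retained, with the same mark $U$), so followed particles split at rate $\rho(b-a)$ each with a uniform mark — exactly rule (ii). Combining the two paragraphs, the generator of $\pi_{\mathcal B}\mathcal A^N$ acting on the configuration of followed particles coincides with that of an $n$-ARG on $\mathcal B$. Because the dynamics of $\pi_{\mathcal B}\mathcal A^N$ depend on the full state $\mathcal A^N$ only through the followed sub-configuration — and this is exactly what the preceding paragraphs verify — the followed sub-configuration is autonomously Markov with the $n$-ARG rates; by uniqueness of the law of a Markov jump process given its generator and initial state $\mathcal B$, $\pi_{\mathcal B}\mathcal A^N$ is an $n$-ARG. (One may also phrase this directly as a Kingman-type consistency argument, using that $\mathcal A^N$ restricted to any time interval between jumps is a Kingman coalescent superposed with an independent Poissonian split stream, each of which is manifestly consistent under sampling of leaves.)

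The main obstacle is the bookkeeping in the ``merge of one followed with one unfollowed particle'' case: one must argue that continuing to follow the coalesced particle is consistent, i.e.\ that the resulting process genuinely has no memory of how many unfollowed particles were absorbed, and that the mark structure on splits carried by followed particles is unaffected. This is where a little care is needed, but it follows from the exchangeability built into $\mathcal A^N$ together with the fact that split rates are per-particle and coalescence rates among followed particles are $\binom k2$ independently of the ambient population; I would make this precise by writing out the restriction of the generator and checking each term, which is routine once the case analysis above is in place.
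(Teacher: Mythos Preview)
Your proposal is correct and follows essentially the same approach as the paper's proof: reduce to checking that each pair of followed particles coalesces at rate~$1$ and each followed particle splits at rate $\rho(b-a)$ with a uniform mark, using exchangeability to handle the fine structure. The paper compresses this into two sentences (invoking exchangeability to justify looking only at the particle-counting process, then asserting the rates), whereas you spell out the three-way case analysis for coalescences and explicitly verify that mixed and unfollowed events do not affect the followed dynamics; your treatment of the ``one followed, one unfollowed'' merge is exactly the point the paper leaves implicit.
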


\begin{proof}
  It suffices to consider the particle-counting process of
  $\pi_{\mathcal B}\mathcal A^N$ since the fine-structure of $\mathcal
  A^N$ is exchangeable. Clearly, pairs of particles coalesce at rate~1
  and every particle splits at the same rate $\rho(b-a)$. Hence, it
  coincides with the particle-counting process of $\mathcal A^n$ and
  we are done.
\end{proof}

\noindent
The projectivity of the $N$-ARG along the genome is stated next:

\begin{lemma}[Projectivity in $\mathbbm G$ of the
  $N$-ARG\label{l:proj2}]
  Let $\mathbbm G = [a,b]$, $\rho>0$, $\mathcal A^N(\mathbbm G)$ be an
  $N$-ARG and $\mathbbm H = [c,d]\subseteq \mathbbm G$. Let
  $\pi_{\mathbbm H} \mathcal A^N$ be the random graph which arises
  from the particle system starting with particles $[N]$ and following
  them along $\mathcal A$. (Upon coalescence events within $\mathcal
  A^N$, followed particles merge as well. If a coalescence event
  within $\mathcal A$ only involves a single followed particle,
  continue to follow the coalesced particle. Splitting events hitting
  a followed particle are followed as well if the mark falls in
  $\mathbbm H$.) Then, $\pi_{\mathbbm H}\mathcal A^N$ equals $\mathcal
  A^N(\mathbbm H)$ in distribution.
\end{lemma}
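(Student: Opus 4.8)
The plan is to prove Lemma~\ref{l:proj2} by the same strategy as the preceding projectivity lemma in~$N$: reduce the identification of the two graphs in distribution to a comparison of their Markovian dynamics, using the fact that all fine structure (the choices of which particles coalesce, the marks on splitting events) is exchangeable and uniform. Concretely, I would first recall that $\mathcal A^N(\mathbbm H)$ is, by Definition~\ref{def:ARG}, a coalescing–splitting particle system on the initial set $[N]$ in which each pair of particles coalesces at rate~$1$ and each particle splits at rate $\rho(d-c)$ with an independent $\mathrm{Unif}(\mathbbm H)$ mark, run until one particle remains. So it suffices to show that $\pi_{\mathbbm H}\mathcal A^N$ has exactly these transition rates and mark law.

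The key steps, in order. (1) Coalescences: in $\mathcal A^N(\mathbbm G)$ every pair of the underlying particles coalesces at rate~$1$; when we restrict attention to the followed particles, a coalescence event that merges two followed particles occurs for each such pair at rate~$1$, while a coalescence involving only one followed particle does not change the followed particle set (we simply relabel). Hence among the followed particles, pairs coalesce at rate~$1$, independently of everything else — matching $\mathcal A^N(\mathbbm H)$. (2) Splits: each followed particle in $\mathcal A^N(\mathbbm G)$ is hit by splitting events at rate $\rho(b-a)$, each carrying an independent $\mathrm{Unif}([a,b])$ mark; in $\pi_{\mathbbm H}$ we keep only those events whose mark lies in $\mathbbm H=[c,d]$. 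By the thinning property of Poisson processes, the retained events along a given followed particle form a Poisson process of rate $\rho(b-a)\cdot\frac{d-c}{b-a}=\rho(d-c)$, and conditional on being retained, the mark is $\mathrm{Unif}([c,d])$ — again matching $\mathcal A^N(\mathbbm H)$, and the retention is independent across particles and independent of the coalescence structure. (3) Stopping: both processes are run until a single particle remains, and $\pi_{\mathbbm H}$ of a graph that has reached one particle has also reached one particle, so the stopping rules agree. Combining (1)–(3), $\pi_{\mathbbm H}\mathcal A^N$ is a time-homogeneous Markov particle system with exactly the generator of $\mathcal A^N(\mathbbm H)$, hence the two agree in distribution.

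The one point requiring a little care — and the place I would expect a referee to push back — is the assertion that the retained (thinned) splitting events and the coalescence events together still form the \emph{same} Markov dynamics as in Definition~\ref{def:ARG}, rather than something with subtle dependencies. The issue is that ``following a particle'' is a path-dependent operation: which branch of $\mathcal A^N$ one follows after a split in $\mathbbm G\setminus\mathbbm H$ is, in $\pi_{\mathbbm H}$, irrelevant (we do not split there), but one must check that ignoring such a split does not bias the future coalescence or splitting rates of the followed particle. This is where exchangeability of Kingman's coalescent and the memorylessness of the exponential clocks does the work: after any event, the followed particle set is again just a set of particles in an ARG-type graph, the remaining inter-event times are fresh exponentials by the strong Markov property, and the future marks are fresh uniforms; none of the rates depend on the identity or history of the particles, only on their number. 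I would phrase the argument so that this ``regeneration after each event'' is explicit, e.g.\ by induction on the number of events, which makes the thinning/exchangeability bookkeeping routine rather than delicate. The remaining details (Poisson thinning, uniform conditioning) are standard and I would not spell them out.
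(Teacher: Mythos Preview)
Your argument is correct and is essentially the same approach as the paper's: both proofs amount to observing that splitting events with marks in $\mathbbm G\setminus\mathbbm H$ (and the lines they spawn) are irrelevant for the followed particle system, so that what remains is an ARG on $\mathbbm H$. The paper's version is a two-sentence sketch phrased in terms of genealogical trees, whereas you make explicit the Poisson thinning and Markov-regeneration reasoning that the paper leaves implicit; your added care about the ``follow one branch after an ignored split'' step is exactly the content hidden behind the paper's remark that coalescences with such lines do not appear in the $\mathbbm H$-trees.
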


\begin{proof}
  It suffices to see that (i) the recombination events at loci in
  $[a,b]\setminus [c,d]$ split off ancestral material not in $[c,d]$
  and therefore don't change the genealogical trees in $\pi_{\mathbbm
    H}\mathcal A^N$ and (ii) coalescences with such lines don't appear
  in genealogical trees in $\mathbbm H$. Leaving out these
  recombination events hence leads to $\mathcal A^N(\mathbbm H)$,
  so the claimed equality follows.
\end{proof}

\section{Proof of Theorem~\ref{T1}}
The proof of Theorem~\ref{T1} requires three steps. First, in order to
obtain existence of limiting processes along subsequences of
$(\mathpzc T^N)_{N=1,2,\dots}$, we have to prove  (see Section~\ref{S:51})
\begin{align}\label{eq:toshow1}
  \text{The family $(\mathpzc T^N)_{N=1,2,\dots}$ is tight in $\mathcal
    D_{\mathbbm M}(\mathbbm G)$.}
\end{align}
Second, we show in Section~\ref{S:52}
\begin{align}\label{eq:toshow2}
  \text{\parbox{10cm}{For any limiting process $\mathpzc T$ along a
      subsequence of $(\mathpzc T^N)_{N=1,2,\dots}$, \eqref{eq:T11}
      holds.}}
\end{align}
This equation determines uniquely the finite-dimensional distributions
of $\mathpzc T$ since polynomials are separating; in particular, since
the right hand side of \eqref{eq:T11} does not depend on the
subsequence, uniqueness of the limiting process follows.  Third,
bounds on the variation process of $\mathpzc T$ are given in
Section~\ref{S:53} such that
\begin{align}\label{eq:toshow3}
  \text{\parbox{10cm}{The paths of $\mathpzc T$ are a.s.\ of finite
  variation with respect to Gromov total variation,
  Gromov-Prohorov and Gromov-Hausdorff metrics.}}
\end{align}

\subsection{Tightness in $\mathcal D_{\mathbbm M}(\mathbbm G)$}
\label{S:51}
In order to prove tightness in the sense~\eqref{eq:toshow1}, we rely
on Theorem~13.6 in \cite{Billingsley1999} (see also Theorem 3.8.8 in
\cite{EthierKurtz1986}), i.e.\ we will show that there is $C>0$ such
that
\begin{align}
  \label{eq:tight}
  \limsup_{N\to\infty} \mathbb E[d_{\mathrm{GTV}}(\mathpzc T_{-h}^N,
  \mathpzc T_{0}^N) \cdot d_{\mathrm{GTV}}(\mathpzc T_{0}^N, \mathpzc
  T_{h}^N)] \leq Ch^2.
\end{align}
Since $d_{\mathrm{GP}}\leq d_{\mathrm{GTV}}$, this implies tightness
with respect to the Gromov-weak topology.

We assume without loss of generality that the interval $[-h,h]$ is
contained in $\mathbbm G$. Note also that
$(\mathpzc T^N_u)_{u\in \mathbbm G}$ is stationary. We combine
Proposition~\ref{P:aux} and Proposition~\ref{P:boundsaux} and write,
for $S_k\sim \mathrm{Exp}\binom k 2$,
\begin{align*}
  \mathbb E[d_{\mathrm{GTV}}(\mathpzc T_{-h}^N, \mathpzc T_{0}^N) &
  \cdot d_{\mathrm{GTV}}(\mathpzc T_{0}^N, \mathpzc T_{h}^N)] \leq
  \mathbb E[d_{\mathrm{aux}}^{\bullet_0, -h}(\mathcal T_{-h}^N,
  \mathcal T_{0}^N) \cdot d_{\mathrm{aux}}^{\bullet_0, h}(\mathcal
  T_{0}^N, \mathcal T_{h}^N)] \\ & = \mathbb E\big[\mathbb
  E[d_{\mathrm{aux}}^{\bullet_0, -h}(\mathcal T_{-h}^N, \mathcal
  T_{0}^N)|\mathcal T_{0}^N] \cdot \mathbb
  E[d_{\mathrm{aux}}^{\bullet_0, h}(\mathcal T_{0}^N, \mathcal
  T_{h}^N)|\mathcal T_{0}^N]\big] \\ & \leq \rho h^2 \cdot \mathbb
  E\Big[\Big(\sum_{k=2}^N S_k\Big)^2\Big] \leq 11\rho h^2.
\end{align*}
The last estimate follows from the following elementary computation:
\begin{align*}
  \E\Bigl[\Bigl(\sum_{k=2}^N S_k)\Bigr)^2\Bigr] & = \sum_{k=2}^N
  \E[S_k^2]
  + 2 \sum_{2 \le k < \ell \le N} \E[S_k]\E[S_\ell] \\
  & \le \sum_{k=2}^\infty \E[S_k^2] + 2 \Bigl(\sum_{k=2}^\infty
  \E[S_k]\Bigr)^2 \\
  & = \sum_{k=2}^{\infty} \frac{8}{k^2(k-1)^2} + 2
  \Bigl(\sum_{\ell=2}^\infty \frac2{\ell(\ell-1)}\Bigr)^2 =
  8\Bigl(\frac{\pi^2}3 -3 \Bigr) + 8 < 11.
\end{align*}
Therefore, we have proved \eqref{eq:tight}.

\subsection{Finite-dimensional distributions}
\label{S:52}

Let $\Phi_i = \Phi^{n_i,\phi_i}, i=1,\dots,j$ be (bounded) polynomials
and $(\mathpzc T_u^N)_{u\in\mathbbm G}$ be as in Definition
\ref{def:Tumm} with $\mathpzc T_u^N = \overline{\mathcal T_u^N}$,
where $\mathcal T_u^N = ([N], r_u^N, \mu)$ is as in
Definition~\ref{def:Tu}. Then, $\mu$ is the uniform distribution on
$[N]$ and $r_u^N$ gives distances between elements of $[N]$ in
$\mathcal T_u^N$. Let 
\begin{align*}
  A_{n,N} = \bigcup_{1\leq i<j\leq n} \{\underline x\in [N]^n: x_i = x_j\}
  \subseteq [N]^n
\end{align*}
be the event that some entry in $\underline x \in [N]^n$ appears
twice. Then, we find that $\mu^{\otimes n}(A_{n,N}) = \mathcal O(1/N)$
(for fixed $n$) as $N\to\infty$. Therefore, by construction, setting
$\underline x_{kl} = (x_k,\dots,x_l)$ and $r(\underline x_{kl},
\underline x_{kl}) = (r(x_i, x_j))_{k\leq i,j\leq l}$, $\overline
n_0=0$, $\overline n_i = n_1 + \cdots + n_i$,
\begin{align*}
  \lim_{N\to\infty} \mathbb E[\Phi_1( & \mathpzc T^N_{u_1})\cdots
  \Phi_j(\mathpzc T^N_{u_j})] = \lim_{N\to\infty} \mathbb E\Big[
  \prod_{i=1}^j \int \mu^{n_i}(d\underline x_{1n_i})
  \phi_i(r_{u_i}^N(\underline x_{1n_i}, \underline x_{1n_i}))\Big] \\
  & = \lim_{N\to\infty} \mathbb E\Big[ \int \mu^{n}(d\underline
  x_{1n}) \prod_{i=1}^j \phi_i(r_{u_i}^N(\underline
  x_{\overline{n}_{i-1},{\overline{n}_{i}}}, \underline
  x_{\overline{n}_{i-1},{\overline{n}_{i}}}))\Big] \\ & =
  \lim_{N\to\infty} \mathbb E\Big[ \int \mu^{n}(d\underline x_{1n})
  \indset{A_{n,N}^c}\prod_{i=1}^j \phi_i(r_{u_i}^N(\underline
  x_{\overline{n}_{i-1},{\overline{n}_{i}}}, \underline
  x_{\overline{n}_{i-1},{\overline{n}_{i}}}))/\mu^n(A_{n,N}^c)\Big] \\
  & =\mathbb E[\phi_1(\underline{\underline R}_1)
  \cdots\phi_j(\underline{\underline R}_j)].
\end{align*}

\subsection{Finite variation}
\label{S:53}

In this subsection we prove the last part of Theorem~\ref{T1}, namely
that the paths of the process
$\mathpzc T = (\mathpzc T_u)_{u\in\mathbbm G}$ are of finite variation
with respect to Gromov total variation, Gromov-Prohorov metric and
Gromov-Hausdorff (semi-)metric on $\mathbbm M$. Recall the definition
of variation with respect to a metric in \eqref{eq:TV-f}. First we
show that for
$d \in \{d_{\mathrm{GTV}},d_{\mathrm{GP}},d_{\mathrm{GH}}\}$ that for
$u,v \in \mathbbm G$, $u < v$ there is a positive finite constant
$C=C(\rho)$ such that
\begin{align}
  \label{eq:fv1}
       \E[ d(\mathpzc T_{u}^N,\mathpzc T_{v}^N)] \le C (v-u).
\end{align}
Once this is proven for a metric $d$, for any interval
$[a,b] \subset \mathbbm G$ and a partition
$a=u_0 < u_1 < \dots < u_k =b$ of that interval we have by the first
part of Theorem~\ref{T1}
\begin{align}
  \label{eq:GP-fv-bound}
  \E[ d(\mathpzc T_{u_i},\mathpzc T_{u_{i-1}})] \le
  \limsup_{N \to \infty}   \E[ d(\mathpzc T_{u_i}^N,\mathpzc
  T_{u_{i-1}}^N)] \le C (u_i-u_{i-1}).
\end{align}
Then it follows easily
\begin{align*}
  \E\Bigl[\sum_{i=1}^k d (\mathpzc T_{u_i},\mathpzc T_{u_{i-1}})
  \Bigr] \le C(b-a).
\end{align*}
Since the right hand side does not depend on particular partition of
$[a,b]$, this shows that the variation of $\mathpzc T$ with respect to
$d$ has finite expectation on finite intervals. Thus, the paths of
$\mathpzc T$ are almost surely of finite variation with respect $d$.

For Gromov total variation and Gromov-Prohorov metrics \eqref{eq:fv1}
follows from $d_{\mathrm{GP}} \le d_{\mathrm{GTV}} \le d_{\mathrm{aux}}$
and \eqref{eq:conddGTV-bound}. For the Gromov-Hausdorff metric
\eqref{eq:fv1} is shown in the following lemma.
\begin{lemma}
   There is a positive finite constant $C$ independent of $N$ so that
   for any $u,v \in \mathbbm G$, $u<v$ we have
   \begin{align}
     \label{eq:GH-fv-bound}
     \E[ d_{\mathrm{GH}}(\mathcal T_{u}^N,\mathcal T_{v}^N)]
     \le C \rho(v-u).
   \end{align}
\end{lemma}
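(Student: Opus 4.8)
The plan is to exploit the construction along the genome from Definition~\ref{def:ARG'} together with the conditional embedding illustrated in Figure~\ref{fig:distance-est}. First I would fix $u<v$, use Remark~\ref{rem:PropS}.1 to replace $(\mathcal T_u^N, \mathcal T_v^N)$ by $(\mathcal S_u^N, \mathcal S_v^N)$ built from a single graph $\mathcal G$, and condition on $\mathcal S_v^N$ (that is, build $\mathcal S_v^N$ first and move leftward to locus $u$, as in part~2 of the proof of Proposition~\ref{P:aux}). Under this conditioning, $\mathcal S_v^N$ carries independent recombination marks at rate $\rho(v-u)$ along each of its branches. The key observation is that, conditionally on $\mathcal S_v^N$, one obtains an explicit isometric embedding of $\mathcal S_v^N$ and $\mathcal S_u^N$ into a common tree $Z$: the leaves $x$ whose path to $\bullet_v$ carries no mark in $[u,v]$ (i.e.\ $x\in[N]_{\bullet_v,u}$) keep the same position, while for each leaf $x\in[N]_{\bullet_v,u}^c$ one detaches the highest marked recombination line on its ancestral path and re-attaches the affected subtree by letting that line coalesce back into the graph. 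In this embedding, every point of $\mathcal S_u^N$ that differs from $\mathcal S_v^N$ lies within distance at most (twice) the total back-coalescence time of the detached recombination lines from the root $\bullet_v$ of $\mathcal S_v^N$.

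The second step is the deterministic estimate of $d_{\mathrm H}$ on this embedding. As in the Example following Figure~\ref{fig:distance-est}, each detached line, once it leaves $\mathcal T_v^N$, coalesces back into the common tree within a random time bounded by the tree height; moreover a point of $\mathcal S_u^N$ is ``moved'' only if its path to the root meets such a detached line. Hence, conditionally on $\mathcal S_v^N$ and on there being at least one mark in $[u,v]$ on the tree, $d_{\mathrm H}(\varphi_u(\mathcal S_u^N),\varphi_v(\mathcal S_v^N))$ is bounded by $2$ times the maximal back-coalescence time, which is in turn stochastically dominated by $2(S_1+S_2+\cdots+S_N)$ with $S_1\sim\mathrm{Exp}(1)$ an independent extra waiting time (the case where the detached line coalesces only above the MRCA of $\mathcal S_v^N$), exactly as in that Example. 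Using $d_{\mathrm{GH}}\le d_{\mathrm H}$ on the concrete embedding, and the fact that $d_{\mathrm H}$ also never exceeds twice the height $2L=2(S_2+\cdots+S_N)$ plus the extra $2S_1$ regardless, I would write
\begin{align*}
  \E[d_{\mathrm{GH}}(\mathcal T_u^N,\mathcal T_v^N)]
  \le \E\bigl[\,\P(\text{some mark in }[u,v]\text{ on the tree}\mid \mathcal S_v^N)\cdot 2(S_1+S_2+\cdots+S_N)\,\bigr].
\end{align*}
Since, given $\mathcal S_v^N$, the probability that at least one branch of total length $L=S_2+\cdots+S_N$ is marked in $[u,v]$ is $1-e^{-\rho(v-u)L}\le \rho(v-u)L$, this gives
\begin{align*}
  \E[d_{\mathrm{GH}}(\mathcal T_u^N,\mathcal T_v^N)]
  \le 2\rho(v-u)\,\E\Bigl[L\bigl(S_1+\textstyle\sum_{k=2}^N S_k\bigr)\Bigr]
  \le 2\rho(v-u)\,\E\Bigl[\bigl(S_1+\textstyle\sum_{k=2}^\infty S_k\bigr)^2\Bigr],
\end{align*}
and the last expectation is a finite constant independent of $N$ by the same elementary computation as in Section~\ref{S:51} (the sums $\sum \E[S_k^2]$ and $\sum\E[S_k]$ converge), which yields \eqref{eq:GH-fv-bound} with an explicit $C$.

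The main obstacle I anticipate is the first step: making precise the claim that the conditional embedding moves points of $\mathcal S_u^N$ by at most the back-coalescence times, when there may be \emph{several} recombination marks in $[u,v]$ on a single ancestral path and the detached subtrees can themselves be re-marked. I would handle this by an induction on the number of marks in $[u,v]$, peeling off the marks one recombination event at a time along the construction $(\mathcal G_n,\mathcal T_n)$ of Definition~\ref{def:ARG'}: each step changes the tree only on the subtree below the detached line, and moves every affected leaf by at most twice the time for the new right branch to coalesce back, which is bounded by twice the current tree height; summing the (at most countably many, a.s.\ finitely many) contributions and bounding the total by $2(S_1+\sum_{k\ge2}S_k)$ closes the argument. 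A secondary, purely bookkeeping point is that $d_{\mathrm{GH}}$ ignores the measures, so no Prohorov-type matching is needed — only the Hausdorff distance of the two embedded leaf sets (together with their internal tree points), which is what the above controls directly.
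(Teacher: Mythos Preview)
Your argument contains a genuine gap in the ``second step'': you assert that, conditionally on $\mathcal S_v^N$, the probability that at least one recombination mark in $[u,v]$ falls on the tree is $1-e^{-\rho(v-u)L}$ with $L=S_2+\cdots+S_N$. But $L$ is the \emph{height} of the tree, whereas the marks fall at rate $\rho(v-u)$ per unit \emph{branch length}, and the total branch length is $\sum_{k=2}^N kS_k$. Hence the correct probability is $1-e^{-\rho(v-u)\sum_k kS_k}\le \rho(v-u)\sum_k kS_k$, and your displayed bound becomes
\[
  \E[d_{\mathrm{GH}}(\mathcal T_u^N,\mathcal T_v^N)]
  \;\le\; 2\rho(v-u)\,\E\Bigl[\Bigl(\sum_{k=2}^N kS_k\Bigr)\Bigl(S_1+\sum_{k=2}^N S_k\Bigr)\Bigr].
\]
Since $\E\bigl[\sum_{k=2}^N kS_k\bigr]=\sum_{k=2}^N 2/(k-1)\sim 2\log N$, the right-hand side diverges as $N\to\infty$, so this does \emph{not} yield a constant $C$ independent of $N$. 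The crude factorisation (probability of \emph{some} mark) $\times$ (worst-case Hausdorff displacement) is simply too coarse here: most of the tree length sits near the leaves (large $k$), but recombinations there typically re-coalesce almost immediately and contribute very little to $d_{\mathrm{GH}}$.

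The paper's proof avoids exactly this divergence by a level-by-level decomposition. It introduces events $A_{k,\ell}$ (a recombination occurs while the tree has $k$ lines and the detached line re-coalesces while the tree has $\ell$ lines), bounds $\P[A_{k,\ell}\mid\mathcal T_u^N]\le \rho(v-u)\,kS_k\cdot \ell S_\ell\cdot\prod_{m=\ell+1}^{k-1}e^{-mS_m}$, and on $A_{k,\ell}$ uses the sharper $d_{\mathrm{GH}}\le 2\sum_{j\ge\ell}S_j$ (not the full height). The product $\prod e^{-mS_m}$ has expectation $\le \ell/(k-2)$, which is the crucial decay that makes the double sum $\sum_{\ell\le k}$ converge uniformly in $N$. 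Your plan would be salvageable if you replaced the global ``some mark'' event by this $(k,\ell)$-decomposition; the induction you outline for multiple marks is then unnecessary, since one bounds each recombination's contribution separately and sums.
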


\begin{proof}
  Given the tree $\mathcal T_{u}^N$ as before we denote by
  $S_2,\dots,S_N$ the time for which exactly $2,\dots,N$ lines are
  present in the tree (cf.\ Figure~\ref{fig:distance-est}). The random
  variables $S_2,\dots,S_N$ are independent and $S_k$ is exponentially
  distributed with mean $1/\binom{k}2$.

  Along the branches of $\mathcal T_{u}^N$ recombination events occur
  at rate $\rho(v-u)$. When a recombination event occurs at level $k$,
  that is during the period of time with exactly $k$ lines in the tree
  $\mathcal T_{u}^N$, then the resulting extra line coalesces back
  into the tree $\mathcal T_{u}^N$ at some time after the
  recombination, that is at level $\ell$ for some
  $1 \leq \ell \leq k$. We also need to consider the level $\ell=1$
  because it might be the case that the extra line coalesces back into
  the tree $\mathcal T_{u}^N$ after all lines of $\mathcal T_{u}^N$
  have coalesced with each other.

  Let $S_1$ be exponentially distributed with mean $1$. Furthermore,
  for $k \ge 2$ and $1 \leq \ell \leq k$ let $A_{k,\ell}$ be the event
  that along the $k$ branches of $\mathcal T_{u}^N$ during time $S_k$,
  at least one recombination event occurs that separates the trees
  $\mathcal T_{u}^N$ and $\mathcal T_{v}^N$, and the resulting extra
  line coalesces back into the tree $\mathcal T_{u}^N$ during time
  $S_\ell$. Figure~\ref{fig:distance-est} shows an example of the
  event $A_{3,2}$.

  Then, ignoring the probability of no coalescence during $S_k$ (hence
  bounding this probability from above by~1),
  \begin{align*}
    \P \bigl[ A_{k,\ell} | \mathcal T_{u}^N \bigr] & \leq (1 -
    e^{-\rho (v-u) kS_k}) \prod_{m=\ell+1}^{k-1} e^{-mS_m}
    (1-e^{-\ell S_\ell}) \\
    & \leq \rho (v-u) kS_k \ell S_\ell \prod_{m=\ell+1}^{k-1}
    e^{-mS_m}.
  \end{align*}
  Note that
  \begin{align*}
    \E \Big[ \prod_{m=\ell+1}^{k-1} e^{-mS_m}\Big]
    & = \prod_{m=\ell+1}^{k-1} \frac{1}{1 + 2/(m-1)} = \prod_{m=\ell}^{k-2} \frac{1}{1 + 2/m} \\
    & = \exp\Big( - \sum_{m=\ell}^{k-2} \log(1+2/m)\Big) \leq
    \exp\Big( - \sum_{m=\ell}^{k-2} 1/m\Big) \\
    & \leq \exp\Big(-\int_\ell^{k-2} \tfrac 1x dx\Big) =
    \frac{\ell}{k-2}.
  \end{align*}
  Furthermore, given $\mathcal T_{u}^N$, on the event $A_{k,\ell}$ we
  have
  \begin{align*}
    d_{\mathrm{GH}}(\mathcal T_{u}^N,\mathcal T_{v}^N) \le 2
    \sum_{j=\ell}^N S_j.
  \end{align*}
  It follows that for some $C_0, C_1, C_2, C_3>0$, which don't depend
  on $N$, $\rho$, $u$ and $v$,
  \begin{align*}
    \limsup_{N\to\infty}\E[ d_{\mathrm{GH}}(\mathcal T_{u}^N,\mathcal T_{v}^N)]
    & \le \limsup_{N\to\infty} \sum_{\ell\leq k\leq N} \E\Bigl[
      2\sum_{j=\ell}^N S_j; A_{k,\ell} \Bigr] \\
    & \leq \limsup_{N\to\infty} C_0 \sum_{\ell\leq k\leq N}
      \sum_{j=\ell}^N \E[ S_j ]\P \bigl[ A_{k,\ell} \bigr] \\
    & \le \limsup_{N\to\infty}C_1\rho (v-u) \sum_{\ell\leq k\leq N}
      \sum_{j=\ell}^N k\ell \frac{\ell/k}{\binom{j}{2}
      \binom{k}{2}\binom{\ell}{2}} \\
    & \leq \limsup_{N\to\infty}C_2\rho(v-u) \sum_{\ell\leq k\leq N}
      \frac{1}{\ell k^2} = C_3 \rho (v-u),
  \end{align*}
  which shows the assertion.
\end{proof}

\section{Proof of Theorem~\ref{T2}}
Theorem~\ref{T2} claims that correlations between trees $\mathpzc T_0$
and $\mathpzc T_v$ decrease with $\mathcal O(1/v^2)$. Such
correlations come with coalescence times present in $\mathpzc T_0$
and $\mathpzc T_v$. Before we come to the proof of Theorem~\ref{T2},
we study such joint coalescences.

\subsection{Covariances of coalescence times}
\begin{lemma}[Covariance of distances at $u=0$ and $u=v$\label{l:T0v}]
  Let $\Phi = \Phi^{2,\phi}$. Then,
  \begin{align}\label{eq:T0v}
    \mathbb P(\Phi^{2,\phi}(\mathpzc T_0) = \Phi^{2,\phi}(\mathpzc
    T_v) \text{ for all $\phi$}) = \frac{2}{9 + 13 \rho v +
      2\rho^2v^2}.
  \end{align}
\end{lemma}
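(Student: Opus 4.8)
The plan is to rephrase the event as the coincidence of two pairwise coalescence times in one ancestral recombination graph, and then to compute that probability from a three-state continuous-time Markov chain.

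First I would unwind the event. By \eqref{eq:T11} (applied with $j=2$, degree-two polynomials, $n_1=n_2=2$ and partition $\{1,2\}\cup\{3,4\}$ of the leaves, cf.\ Remark~\ref{rem:T1}), the event in \eqref{eq:T0v} concerns the two-leaf trees $\mathpzc T_0^{\{1,2\}}(\mathcal A)$ and $\mathpzc T_v^{\{3,4\}}(\mathcal A)$ read off a single $4$-ARG $\mathcal A=\mathcal A^{4}$ on $\{1,2,3,4\}$. For a two-point mm-space with the uniform measure one has $\Phi^{2,\phi}=\tfrac12\phi(0)+\tfrac12\phi(R)$ with $R$ the leaf-to-leaf distance, and $\tfrac12\phi(0)+\tfrac12\phi(R)=\tfrac12\phi(0)+\tfrac12\phi(R')$ for all bounded $\phi$ precisely when $R=R'$. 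Hence, with $R_1$ the leaf distance in $\mathpzc T_0^{\{1,2\}}(\mathcal A)$ and $R_2$ that in $\mathpzc T_v^{\{3,4\}}(\mathcal A)$, the event is $\{R_1=R_2\}$; writing $R_1=2T_{12}$, $R_2=2T_{34}$ with $T_{12}$ (resp.\ $T_{34}$) the time until the two lineages followed from $1,2$ for locus $0$ (resp.\ from $3,4$ for locus $v$) coalesce in $\mathcal A$, it equals $\{T_{12}=T_{34}\}$. Both times are a.s.\ finite since $\mathcal T_0^{4}$ and $\mathcal T_v^{4}$ are $4$-coalescents, and since these lineages only merge at coalescence events of $\mathcal A$, $T_{12}=T_{34}$ holds iff one coalescence event of $\mathcal A$ realizes, simultaneously and for the first time, the meeting of the two locus-$0$ lineages and of the two locus-$v$ lineages.

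Next I would set up the Markov chain by following, backwards in time, the four pieces of ancestral material $a$ (locus $0$ of leaf $1$), $b$ (locus $0$ of leaf $2$), $c$ (locus $v$ of leaf $3$), $d$ (locus $v$ of leaf $4$) and recording only the partition of $\{a,b,c,d\}$ induced by ``carried by the same particle of $\mathcal A$'', up to the time $a$ meets $b$ or $c$ meets $d$. The relevant transitions are: any two of the (at most four) relevant blocks merge at rate $1$, while coalescences involving a particle carrying none of $a,b,c,d$ and all recombinations other than the ones below leave the partition unchanged; and a \emph{mixed} block (one containing a locus-$0$ mark and a locus-$v$ mark) splits, at rate $\rho v$, into its locus-$0$ part and its locus-$v$ part — the two locus-$0$ pieces, resp.\ the two locus-$v$ pieces, sitting at a common genomic position, can never be separated from one another. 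Before $a$ meets $b$ or $c$ meets $d$, no block contains $\{a,b\}$ or $\{c,d\}$, so every block has size $\le 2$ and every size-two block is one of $\{a,c\},\{a,d\},\{b,c\},\{b,d\}$; up to the symmetries $a\leftrightarrow b$ and $c\leftrightarrow d$ this leaves only three states, $S_0$ (four singletons; the start), $S_1$ (one mixed pair and two singletons), $S_2$ (two mixed pairs covering $\{a,b,c,d\}$). Moreover $\{T_{12}=T_{34}\}$ occurs precisely when the chain sits in $S_2$ and its two blocks coalesce; a coalescence realizing $a\sim b$ (or $c\sim d$) out of $S_0$ or $S_1$ necessarily leaves $c\not\sim d$ (resp.\ $a\not\sim b$), hence gives $T_{12}\ne T_{34}$.

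Finally I would read off the rates — from $S_0$: rate $4$ to $S_1$, rate $2$ to failure; from $S_1$: rate $1$ to $S_2$, rate $\rho v$ to $S_0$, rate $2$ to failure; from $S_2$: rate $1$ to success, rate $2\rho v$ to $S_1$ — and solve. With $p_i$ the success probability from $S_i$ and $r=\rho v$ this gives $p_0=\tfrac23 p_1$, $(3+r)p_1=p_2+r\,p_0$, $(1+2r)p_2=1+2r\,p_1$; eliminating $p_1,p_2$ yields $\tfrac12(9+13r+2r^2)\,p_0=1$, i.e.\ $p_0=\frac{2}{9+13\rho v+2\rho^2 v^2}$, which is \eqref{eq:T0v}. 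I expect the main obstacle to be the bookkeeping of the previous paragraph: checking that only $S_0,S_1,S_2$ are visited before absorption, that every event of $\mathcal A$ other than a rate-$1$ merge of two relevant blocks or a rate-$\rho v$ split of a mixed block is immaterial to the tracked partition, and that $\{T_{12}=T_{34}\}$ is exactly the transition $S_2\to\text{success}$.
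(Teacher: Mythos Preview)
Your proposal is correct and follows essentially the same route as the paper: both reduce the event to $\{R_{12,0}=R_{34,v}\}$ in a $4$-ARG and then solve a three-state first-event decomposition, your states $S_0,S_1,S_2$ corresponding exactly to the paper's probabilities $x=\mathbb P(R_{12,0}=R_{34,v})$, $y=\mathbb P(R_{12,0}=R_{23,v})$, $z=\mathbb P(R_{12,0}=R_{12,v})$, with identical linear equations $p_0=\tfrac23 p_1$, $(3+r)p_1=p_2+rp_0$, $(1+2r)p_2=1+2rp_1$. Your partition-based bookkeeping is a bit more explicit about why only three states arise and which ARG events are immaterial, but the argument is the same.
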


\begin{proof}
  From Theorem~\ref{T1}, we see that the left hand side of
  \eqref{eq:T0v} is given as follows: Let $\mathcal A^4$ be an ARG
  starting with four lines, $R_{12,0}$ be the distance of the pair
  $1,2$ at $u=0$ and $R_{34, v}$ the distance of $3,4$ at $u=v$. Then,
  $\Phi^{2,\phi}(\mathpzc T_0) = \Phi^{2,\phi}(\mathpzc T_v)$ for all
  $\phi$ has the same probability as $R_{12,0}=R_{34, v}$ (recall that
  $\phi$ is a function in the definition of polynomials is a function
  of pairwise distances; see \eqref{eq:polynom}).

  Hence, the LHS of \eqref{eq:T0v} equals $x$, where
  \begin{align*}
    x & = \mathbb P(R_{12,0} = R_{34,v}) \\
    y & = \mathbb P(R_{12,0} = R_{23,v}) \\
    z & = \mathbb P(R_{12,0} = R_{12,v}) \\
  \end{align*}
  The first event in the ARG starting in four lines, can be:
  \begin{itemize}
  \item[(i)] coalescence of one of the pairs (1,3), (2,3), (1,4),
    (2,4)
  \item[(ii)] coalescence of one of the pairs (1,2), (3,4)
  \item[(iii)] Some recombination event.
  \end{itemize}
  In case (iii), the probability $x$ is not changed after the
  recombination event, in case (ii), there is no way that the event
  $R_{12,0}=R_{34, v}$ (hence has probability~0). In case (i),
  however, the probability is the same as in an ARG with three lines,
  that lines 1 and 2 at locus 0 coalesce at the same time as lines 2
  and 3 at locus v. This probability is defined to be $y$. Similar
  arguments for a first-event-decomposition in the probabilities $y$
  and $z$ lead to
  \begin{equation}
    \label{eq:resta}
    \begin{aligned}
      x & = \frac{2}{3} y + \frac 13 \cdot 0,\\
      y & = \frac{\rho v}{\rho v + 3} x + \frac{1}{\rho v + 3} z + \frac{2}{\rho v + 3}\cdot 0,\\
      z & = \frac{2\rho v}{2\rho v + 1}y + \frac{1}{2\rho v +1}\cdot 1.
    \end{aligned}
  \end{equation}
  Solving this linear system gives the result.
\end{proof}

\noindent
Using the last lemma, we immediately obtain another useful result.

\begin{corollary}[Samples of size $n\geq 2$\label{cor:T0v}]
  Let $n\geq 2$, $\mathcal A^{2n}$ be a $2n$-ARG, $R_{ij,u}$ be the
  distance of the pair $i,j$ at position $u$ for $u\in\{0,v\}$. Then,
  \begin{align*}
    \mathbb P(R_{ij,0} = R_{k \ell,v} \text{ for some } 1\leq i<j \leq
    n;\;
    & n+1\leq k<\ell\leq 2n) \\
    & \leq \binom{n}{2}^2 \frac{2}{9 + 13 \rho v + 2\rho^2v^2}.
  \end{align*}
\end{corollary}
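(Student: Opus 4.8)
The plan is to reduce the event in question to the two-locus, four-line event already quantified in Lemma~\ref{l:T0v}, and then apply a union bound. First I would write
\begin{align*}
  \bigl\{ R_{ij,0} = R_{k\ell,v} \text{ for some } 1\leq i<j\leq n,\ n+1\leq k<\ell\leq 2n \bigr\}
  = \bigcup_{\substack{1\leq i<j\leq n\\ n+1\leq k<\ell\leq 2n}} \bigl\{ R_{ij,0} = R_{k\ell,v}\bigr\},
\end{align*}
so that by subadditivity the probability of interest is at most $\sum_{i,j,k,\ell} \mathbb P(R_{ij,0}=R_{k\ell,v})$, a sum of $\binom n2^2$ terms since there are $\binom n2$ admissible pairs $\{i,j\}\subseteq\{1,\dots,n\}$ and $\binom n2$ admissible pairs $\{k,\ell\}\subseteq\{n+1,\dots,2n\}$.

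Next I would fix one such quadruple $i<j\leq n<k<\ell$ (these four leaves are automatically distinct because the two index blocks are disjoint) and invoke projectivity of the ARG in $N$ (the first of the two projectivity lemmas): the subgraph obtained by following the ancestral lines of $\{i,j,k,\ell\}$ along $\mathcal A^{2n}$ is distributed as a $4$-ARG. Moreover the random variables $R_{ij,0}$ and $R_{k\ell,v}$ are measurable functions of this followed subgraph alone -- $R_{ij,0}$ is twice the coalescence time of $i,j$ in the tree at locus $0$, and $R_{k\ell,v}$ is twice the coalescence time of $k,\ell$ at locus $v$ -- so relabelling $(i,j,k,\ell)\mapsto(1,2,3,4)$ gives
\begin{align*}
  \mathbb P\bigl(R_{ij,0}=R_{k\ell,v}\bigr) = \mathbb P\bigl(R_{12,0}=R_{34,v}\bigr) = \frac{2}{9+13\rho v + 2\rho^2 v^2},
\end{align*}
where the last equality is exactly Lemma~\ref{l:T0v} (together with its observation that $R_{12,0}=R_{34,v}$ has the same probability as $\Phi^{2,\phi}(\mathpzc T_0)=\Phi^{2,\phi}(\mathpzc T_v)$ for all $\phi$). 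Plugging this uniform per-term bound into the union bound yields the claimed inequality $\binom n2^2\,\frac{2}{9+13\rho v + 2\rho^2 v^2}$.

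I do not expect a genuine obstacle here; the only point requiring a line of care is the justification that $R_{ij,0}$ and $R_{k\ell,v}$ depend only on the ancestry of the four distinguished leaves, so that projectivity in $N$ can be applied to pass from the $2n$-ARG to a $4$-ARG without altering the joint law of these two distances. Once that is noted, the argument is a one-step union bound over $\binom n2^2$ events of equal probability.
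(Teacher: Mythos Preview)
Your proposal is correct and matches the paper's intent: the paper offers no explicit proof, stating only that the corollary follows ``immediately'' from Lemma~\ref{l:T0v}, and your union bound over the $\binom n2^2$ quadruples together with projectivity in $N$ is precisely the implicit argument. The one remark about $R_{ij,0}$ and $R_{k\ell,v}$ being measurable with respect to the subgraph spanned by $\{i,j,k,\ell\}$ is the right thing to flag, and it is clear from the definitions.
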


\subsection{An auxiliary random graph}
For the proof of Theorem~\ref{T2}, we recall the $2n$-ARG $\mathcal
A^{2n}$ for loci $v\in\{0,u\}$. We let $R_{ij,v}$ be the distance
between $i,j$ at locus $v$ for $v\in\{0,u\}$. We set $\mathcal T_0 =
\mathcal T_0^{\{1,\dots,n\}}$ and $\mathcal T_v = \mathcal
T_v^{\{n+1,\dots,2n\}}$ (recall the notation from
Definition~\ref{def:Tu}). The following events can happen:
\begin{enumerate}
\item ``Intra-tree'' coalescence events: If in $\mathcal A^{2n}$ two
  particles coalesce and both particles belong to $\mathcal T_v$, then
  the total number of particles \emph{and} the number of particles in
  $\mathcal T_v$ decreases, $v\in\{0,u\}$.
\item ``Inter-tree'' coalescence events: If two particles (in
  $\mathcal A^{2n}$) coalesce and one of the particles is present in
  $\mathcal T_u \setminus \mathcal T_0$, and the other is present in
  $\mathcal T_0 \setminus \mathcal T_u$, then the total number of
  particles decreases but the numbers of particles within $\mathcal
  T_0$ and $\mathcal T_u$ are preserved.
\item ``Splitting recombination'' events: If a particle, present in
  the overlap of the trees $\mathcal T_u \cap \mathcal T_0$,
  recombines with mark $U\in[0,u]$, then the particle splits in two
  new particles, one present in $\mathcal T_u\setminus\mathcal T_0$,
  the other one present in $\mathcal T_0\setminus\mathcal T_u$.
\end{enumerate}
We call a branch in $\mathcal A^{2n}$ a \emph{single line} if it
belongs to $(\mathcal T_0\setminus\mathcal T_u) \cup (\mathcal
T_u\setminus\mathcal T_0)$, whereas branches in $\mathcal
T_0\cap\mathcal T_u$ are called \emph{double lines}. Intra-coalescence
occur simultaneously within $\mathcal T_0$ and $\mathcal T_u$ if two
double lines coalesce. These are the events that make $\mathcal T_0$
and $\mathcal T_v$ dependent. We will call such an event \emph{joint
  coalescence}.

~

\noindent
We now define a random graph $\widehat {\mathcal A}^{2n}$ based on
$\mathcal A^{2n}$ such that we can couple $\mathcal T_0$ and $\mathcal
T_u$ with two independent trees $\widehat{\mathcal T}_0$ and
$\widehat{\mathcal T}_u$, both having the distribution of a Kingman's
$n$-coalescent; see Lemma~\ref{l:aux}.

\begin{definition}[An auxiliary random graph]
  Define a random graph $\widehat {\mathcal A}^{2n}$, from which two
  trees $\widehat{\mathcal T}_0$ and $\widehat{\mathcal T}_u$ can be
  read off as in Definition~\ref{def:Tu}, as follows: Starting with
  $2n$ single lines, where $1,\dots,n \in \widehat{\mathcal T}_0
  \setminus \widehat{\mathcal T}_u$ and
  $n+1,\dots,2n\in\widehat{\mathcal T}_u \setminus \widehat{\mathcal
    T}_0$, the dynamics of the lines in $\widehat{\mathcal A}^{2n}$
  are as follows (see Figure~\ref{fig:mod-arg}):
  \begin{enumerate}[(i)]
  \item Each pair of single lines coalesces at rate $1$. The result
    can be a single line (if both lines belong to $\widehat{\mathcal
      T}_0$ or both to $\widehat{\mathcal T}_u$) or a double line.
  \item Each pair of lines where one is a single line and the other a
    double line coalesces at rate $1$. The resulting line is a double
    line.
  \item Each double line splits at rate $\rho u$ into two single
    lines.
  \item Between each pair of double lines there is a
    \emph{coalescence/splitting event} at rate $2$. This event
    produces a double line and a single line. With probability $1/2$
    the resulting single line is in $\widehat{\mathcal T}_0$ or in
    $\widehat{\mathcal T}_u$, respectively.
  \end{enumerate}
\end{definition}

\begin{remark}[Properties of $\widehat{\mathcal T}_0$ and
  $\widehat{\mathcal T}_u$]
  Note that the above dynamics in $\widehat{\mathcal A}^{2n}$ are the
  same as in $\mathcal A^{2n}$ except for the coalescence/splitting
  event described in (iv).  The corresponding event in $\mathcal
  A^{2n}$ was called joint coalescence above. In particular, we remark
  that we can perfectly couple $\mathcal A^{2n}$ with
  $\widehat{\mathcal A}^{2n}$ until the first coalescence/splitting
  event occurs.
\end{remark}

\begin{lemma}[Properties of $\widehat{\mathcal A}^{2n}$\label{l:aux}]
  We note the following properties of $\widehat{\mathcal A}^{2n}$:
  \begin{enumerate}
  \item $\widehat{\mathcal T}_0$ and $\widehat{\mathcal T}_u$ are
    independent and distributed as $n$-coalescents.
  \item If we couple $\mathcal A^{2n}$ and $\widehat{\mathcal A}^{2n}$
    until the first event (iv) happens, and let them evolve
    independently otherwise, then
    \begin{align*}
      \{\text{no event (iv) happens}\}
      \subseteq \{\mathcal T_0 = \widehat{\mathcal T}_0\}\cap
      \{\mathcal T_u = \widehat{\mathcal T}_u\}.
    \end{align*}
  \item For $n \ge 2$, there is $C=C(n)>0$ such that
    \begin{align*}
       \mathbbm P(\text{no event (iv) happens}) \leq C/ (\rho^2 u^2).
    \end{align*}
  \end{enumerate}
\end{lemma}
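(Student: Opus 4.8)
The plan is to establish the three assertions in turn.

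For the first, I would argue that, whatever the current single/double structure of $\widehat{\mathcal A}^{2n}$ happens to be, every pair of lines of $\widehat{\mathcal T}_0$ coalesces at total rate exactly $1$, likewise for $\widehat{\mathcal T}_u$, and moreover that no single transition of $\widehat{\mathcal A}^{2n}$ induces a coalescence in $\widehat{\mathcal T}_0$ and in $\widehat{\mathcal T}_u$ at the same instant. The rate statement is a short case distinction: two single lines of $\widehat{\mathcal T}_0$ coalesce at rate $1$ by (i); a single and a double line at rate $1$ by (ii); and a pair of double lines produces, via the rate-$2$ event (iv), a single line lying in $\widehat{\mathcal T}_u$ with probability $1/2$, and precisely in that case the two $0$-parts merge, so the effective coalescence rate in $\widehat{\mathcal T}_0$ is again $2\cdot\tfrac12=1$; the same reasoning applies verbatim to $\widehat{\mathcal T}_u$. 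Inspecting each transition shows in addition that it is neutral for at least one of the two partitions. Granting this, the pair of partition-valued processes that $\widehat{\mathcal T}_0$ and $\widehat{\mathcal T}_u$ induce on $\{1,\dots,n\}$ and on $\{n+1,\dots,2n\}$ is Markov with generator equal to the sum of the generators of two $n$-coalescents acting on the two coordinates separately; hence the two coordinates — and therefore the trees $\widehat{\mathcal T}_0$ and $\widehat{\mathcal T}_u$ with their tree metrics — are independent $n$-coalescents.

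For the second assertion I would simply invoke the coupling recorded in the remark preceding the lemma: the dynamics of $\widehat{\mathcal A}^{2n}$ agree with those of $\mathcal A^{2n}$, read off along the traced lineages, except that the joint coalescence of two double lines in $\mathcal A^{2n}$ is replaced by event (iv) in $\widehat{\mathcal A}^{2n}$. Running the two graphs with the same driving randomness up to the first event (iv), on $\{\text{no event (iv) ever occurs}\}$ the two graphs, and hence the trees read off from them, coincide on all of $\mathbbm G$, which is the stated inclusion.

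For the third assertion — which I read, in line with its use in the proof of Theorem~\ref{T2}, as the bound $\P(\text{at least one event (iv) occurs})\le C/(\rho^2u^2)$ — the plan is to reduce to $n=2$ and then solve a small linear system. An event (iv) involves an interacting pair of double lines $A,B$; since $A$ is ancestral, at locus $0$, to a nonempty $S_A\subseteq\{1,\dots,n\}$ and, at locus $u$, to a nonempty $T_A\subseteq\{n+1,\dots,2n\}$, and likewise $B$ (with $S_A\cap S_B=T_A\cap T_B=\varnothing$), I can pick $a_1\in S_A$, $a_2\in S_B$, $b_1\in T_A$, $b_2\in T_B$; restricting the construction to these four leaves yields a copy of $\widehat{\mathcal A}^4$ in which an event (iv) likewise occurs among the two traced double lines. (The required consistency under subsampling is checked, as in the projectivity lemmas for the ARG, by verifying that the transitions of $\widehat{\mathcal A}^{2n}$ which do not already match the rules of $\widehat{\mathcal A}^4$ on the four traced lineages leave both four-leaf trees unchanged and create no event (iv) among them.) A union bound over the $\binom n2^2$ choices of $(a_1,a_2,b_1,b_2)$ then reduces everything to $n=2$. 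For $n=2$, as long as $\widehat{\mathcal T}_0$ and $\widehat{\mathcal T}_u$ each still have two lines the number $D$ of double lines takes values in $\{0,1,2\}$; reading off the jump probabilities of the embedded chain of $D$ and performing a first-event decomposition exactly as in the proof of Lemma~\ref{l:T0v} gives a $3\times 3$ linear system with solution $\P(\text{some event (iv) in }\widehat{\mathcal A}^4)=\tfrac{2}{9+7\rho u+\rho^2u^2}$, and combining yields $\P(\text{some event (iv) in }\widehat{\mathcal A}^{2n})\le\binom n2^2\tfrac{2}{9+7\rho u+\rho^2u^2}\le C(n)/(\rho^2u^2)$, consistent with Remark~\ref{rem:n4}.

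I expect the third assertion to be the main obstacle, and within it the delicate point is obtaining the exponent $2$ in $\rho^2u^2$ rather than $1$. The weaker fact that two double lines are simultaneously present at some time has probability only of order $1/(\rho u)$, so a crude union bound or a first/second moment estimate on the number of double lines falls short by a factor $\rho u$ (or, with more care, by a logarithm); the extra decay comes from the fact that, once two double lines coexist, a recombination — occurring at rate proportional to $\rho u$ — typically dissolves one of them before they interact, and capturing this quantitatively seems to require the recursive first-event argument. Reducing to $n=2$ first keeps that argument down to a manageable $3\times3$ system.
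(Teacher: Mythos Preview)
Your proposal is correct and follows essentially the same route as the paper. For part~1 you actually spell out the rate computation and the generator argument that the paper hides behind the phrase ``independent by construction''; for part~3 you set up the same $3\times 3$ first-event system (states indexed by the number of double lines while both subtrees still have two lineages), obtain the same solution $2/(9+7\rho u+\rho^2u^2)$, and pass to general $n$ by the same $\binom{n}{2}^2$ union bound invoked in the paper via Corollary~\ref{cor:T0v}. You also correctly read the inequality in part~3 as a bound on $\mathbbm P(\text{some event (iv) happens})$, which is how the paper's own proof and Remark~\ref{rem:n4} use it.
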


\begin{proof}
1. Obviously in $\widehat{\mathcal A}^{2n}$ each pair of lines in
$\widehat{\mathcal T}_0$ coalesces at rate $1$ and also each pair of
lines in $\widehat{\mathcal T}_u$ coalesces at rate $1$, so that both
trees are Kingman's coalescents, and the trees are independent by
construction.

\noindent
2. Denoting by $A$ the event that at a coalescence/splitting event in
$\widehat{\mathcal A}^{2n}$ occurs, we have
$\mathcal A^{2n} = \widehat{\mathcal A}^{2n}$ on $A^c$ by
construction.

\noindent
3. By construction, $A$ occurs at rate $2$ for every pair of lines
within $\mathcal Z_{\mathrm{joint}}$. For non-negative integers $a$,
$b$ and $c$ we indicate by $\mathbbm P_{abc}$ computations of
probabilities within $\widehat{\mathcal A}^{2n}$ with start in
\begin{itemize}
\item $a$ single lines within $\widehat{\mathcal T}_0$,
\item $b$ double lines,
\item $c$ single lines within $\widehat{\mathcal T}_u$.
\end{itemize}
Then, for
\begin{align*}
  x = \mathbbm P_{202}(A), \qquad y = \mathbbm P_{111}(A), \qquad
  z = \mathbbm P_{020}(A),
\end{align*}
we obtain the same set of equations as in~\eqref{eq:resta} with the
last one replaced by
\begin{align*}
   z  = \frac{2\rho u}{2\rho u + 2}y + \frac{2}{2\rho u +2}\cdot 1.
\end{align*}
Solving this system gives
\begin{align*}
   x = \frac{2}{9 + 7\rho u + \rho^2 u^2},
\end{align*}
which shows the assertion for $n=2$. As in Corollary~\ref{cor:T0v}, we
obtain for all $n=2,3,\dots$
\begin{align*}
  \P(\text{some event (iv) happens}) \leq {\binom n 2}^2 \frac{2}{9 + 7\rho u + \rho^2 u^2}
\end{align*}
which concludes the proof.
\end{proof}

\begin{figure}
  \centering
  \begin{tikzpicture}[xscale=0.9,yscale=0.9]
    \draw[very thick,black] (0,0) -- (0,1) -- (1,1) -- (1,0)
                            (0.5,1) -- (0.5,2.8)--(0,2.8)--(0,4) --
                            (0.5,4) -- (0.5,5) --(1.5,5) -- (1.5,5.6)
                            -- (1,5.6) -- (1,7)
                            (2,0) -- (2,1.5) -- (2.45,1.5) --
                            (2.45,4.1) -- (3.5,4.1)--(3.5,5) -- (1.5,5);

    \draw[very thick,lightgray] (3,0) -- (3,1.5) -- (2.55,1.5) --
                                (2.55,4) -- (3.6,4) -- (3.6,6.5) -- (3.1,6.5)
                                (4,0)  -- (4,2.4) -- (2.69,2.4)
                                (2.31,2.4)--(0.6,2.4) --(0.6,2.8) --
                                (1,2.8) -- (1,4) -- (0.6,4) -- (0.6,4.9) --
                                (1.6,4.9) -- (1.6,5.6) -- (2.6,5.6) --
                                (2.6,6.5) -- (3.1,6.5)--(3.1,7)
                                (5,0) -- (5,4) -- (3.6,4);
    \draw[very thick,lightgray] (2.3,2.4) to[out=45,in=135] (2.7,2.4);

    \draw (0.5,0.7) node {(i)};
    \draw (2.5,1.2) node {(i)};
    \draw (0.5,3.1) node {(iii)};
    \draw (3.6,3.7) node {(ii)};

    \draw[dashed] (2.5,4.95) ellipse (42pt and 6pt);
    \draw (4.4,5) node {(iv)};

  \end{tikzpicture}
  \caption{Reading off trees at different loci starting with disjoint
    sets of leaves from modified ARG $\widehat A^{2n}$ with $n=3$.
    Some of the events are annotated according to the description. The
    dashed ellipsis encloses the event which is not possible in the
    original ARG, cf.\ Figure~\ref{fig:arg-neigb-trees}.}
  \label{fig:mod-arg}
\end{figure}

\subsection{Proof of Theorem~\ref{T2}}
Let $\Psi = \Psi^{n,\psi}$ and $\Phi = \Phi^{n,\phi}$. According to
Theorem~\ref{T1}, we need to consider a $2n$-ARG $\mathcal A^{2n}$ and
let $R_{ij,v}$ be the distance between $i,j$ at locus $v$ for
$v\in\{0,u\}$. Writing $\underline{\underline R}_0 \coloneqq
(R_{ij,0})_{1\leq i,j\leq n}$, $\underline{\underline R}_v \coloneqq
(R_{ij,v})_{n+1\leq i,j\leq 2n}$, Theorem~\ref{T1} gives
\begin{align}\label{eq:PhiPsiDep}
  \mathbb{COV}[\Psi(\mathpzc T_0),\Phi(\mathpzc T_u)] =
  \mathbb{COV}[\psi(\underline{\underline R}_0),
  \phi(\underline{\underline R}_u)].
\end{align}
Let $\widehat{\mathcal T}_0, \widehat{\mathcal T}_u$ be as in
Lemma~\ref{l:aux}, which are coupled with $\mathcal T_0,
\mathcal T_u$ before the first coalescence/splitting event
happens. Let $\underline{\underline{\widehat R}}_0$ and
$\underline{\underline {\widehat R}}_u$ be the (finite) distance
matrices that correspond to $\widehat{\mathcal T}_0$ and
$\widehat{\mathcal T}_u$. Slightly abusing the notation we write
\begin{align*}
  \psi_0 = \psi(\underline{\underline R}_0),\quad   \phi_u =
  \phi(\underline{\underline R}_u), \quad  \widehat\psi_0 =
  \psi(\underline{\underline{\widehat R}}_0)\quad \text{and} \quad
  \widehat\phi_u = \phi(\underline{\underline{\widehat R}}_u).
\end{align*}
Denoting by $A$ the event that a coalescence/splitting event in
$\widehat{\mathcal A}^{2n}$ occurs, we have using Lemma~\ref{l:aux}
\begin{align*}
  \E[\psi_0 \phi_u]
  & = \E[\psi_0 \phi_u \indset{A^c}] + \E[\psi_0 \phi_u \indset{A}] \\
  & = \E[\widehat \psi_0 \widehat  \phi_u \indset{A^c}] + \E[\psi_0
    \phi_u \indset{A}]\\
  & = \E[\widehat \psi_0 \widehat  \phi_u] - \E[\widehat \psi_0
    \widehat  \phi_u  \indset{A}] + \E[\psi_0 \phi_u \indset{A}]
  \\
  & =  \E[\widehat \psi_0] \E[\widehat  \phi_u] - \E[\widehat \psi_0
    \widehat  \phi_u  \indset{A}] + \E[\psi_0 \phi_u \indset{A}]
\\
  & =  \E[\psi_0] \E[\phi_u] - \E[\widehat \psi_0
    \widehat  \phi_u  \indset{A}] + \E[\psi_0 \phi_u \indset{A}].
\end{align*}
It follows now that for $ C = 2 \mathbbm P(A)$, we have
\begin{align*}
 \Abs{ \E[\psi_0 \phi_u] -   \E[\psi_0]\E[\phi_u]} \le
 C \norm{\psi}_\infty \norm{\phi}_\infty,
\end{align*}
which, in view of Lemma~\ref{l:aux}.3.\ shows the assertion of
Theorem~\ref{T2} .

\paragraph{Acknowledgments}
This research was supported by the DFG through the priority program
1590, and in particular through grant Pf-672/6-1 to PP.

\bibliographystyle{chicago}

\end{document}